\newtheorem{thm}{Theorem}[section]
\newtheorem{lem}[thm]{Lemma}
\theoremstyle{definition}
\newtheorem{dfn}[thm]{Definition}
\newdefinition{rmk}[thm]{Remark}
\theoremstyle{remark}
\numberwithin{equation}{section}
\journal{Journal of \LaTeX\ Templates}
\def\ps@pprintTitle{%
   \let\@oddhead\@empty
   \let\@evenhead\@empty
   \def\@oddfoot{\reset@font\hfil\thepage\hfil}
   \let\@evenfoot\@oddfoot
}
\begin{document}

\begin{frontmatter}

\title{$P_k$ and $C_k$-structure and substructure connectivity of hypercubes}
\author[]{Yihan Chen\corref{cor1}}
\ead{2016750518@smail.xtu.edu.cn}
\author[]{Bicheng Zhang\corref{cor1}}
\ead{zhangbicheng@xtu.edu.cn}

\address{School of Mathematics and Computational Science, Xiangtan Univerisity, Xiangtan, Hunan, 411105, PR China}
\cortext[cor1]{Corresponding author}



%

\begin{abstract}
Hypercube is one of the most important networks to interconnect processors in multiprocessor computer systems. Different kinds of connectivities are important parameters to measure the fault tolerability of networks. Lin et al.\cite{LinStructure} introduced the concept of $H$-structure connectivity $\kappa(Q_n;H)$ (resp. $H$-substructure connectivity $\kappa^s(Q_n;H)$) as the minimum cardinality of $F=\{H_1,\dots,H_m\}$ such that $H_i (i=1,\dots,m)$ is isomorphic to $H$ (resp. $F=\{H'_1,\dots,H'_m\}$ such that $H'_i (i=1,\dots,m)$ is isomorphic to connected subgraphs of $H$) such that $Q_n-V(F)$ is disconnected or trivial. In this paper, we discuss $\kappa(Q_n;H)$ and $\kappa^s(Q_n;H)$ for hypercubes $Q_n$ with $n\geq 3$ and $H\in \{P_k,C_k|3\leq k\leq 2^{n-1}\}$. As a by-product, we solve the problem mentioned in \cite{ManeStructure}.
\end{abstract}
\begin{keyword}
Structure connectivity;\;Substructure connectivity;\;Hypercubes;\;Path;\;Cycle
\end{keyword}
\end{frontmatter}
\section{Introduction}
In the design of multiprocessor computer systems, the analysis of topological properties of interconnection networks is very important. Usually, an efficient interconnection topology can offer higher speed and better quality of information transmission between processors, most importantly, it can also provide high fault tolerability.

Many regular networks have been proposed as the model to interconnect processors, such as hypercube, mesh and twiced cube etc. In the past, people used vertex connectivity to measure the fault tolerability of interconnect networks, their works focuse on the effect of individual processors becoming fault. People soon discovered that there were some shortcomings in the using of vertex connectivity, in order to make up for these shortcomings, Harary defined the conditional connectivity. Later, many people generalized vertex connectivity in various ways and got many benefits, for example, restricted connectivity, $g$-extra connectivity and $R_g$-connectivity etc.

Lin et al.\cite{LinStructure} introduced the concept of $H$-structure connectivity $\kappa(Q_n;H)$ and the $H$-substructure connectivity $\kappa^s(Q_n;H)$ and determined them for $H\in\{K_1,K_{1,1},K_{1,2},K_{1,3},C_4\}$. Li et al.\cite{LI2019169} determined $\kappa(H_n;T)$ and $\kappa^s(H_n;T)$ with $H_n$ the twisted hypercube and $T\in\{K_{1,r},P_k|r=3,4 \text{ and } 1\leq k\leq n\}$. Mane \cite{ManeStructure} proved that if $n\geq 4$, then for each $2\leq m \leq n-2$, $\kappa(Q_n;C_{2^m})\leq n-m$ and gave an open problem: For $n\geq 4$ and for each $2\leq m\leq n-2$, can we prove $\kappa(Q_n;C_{2^m})= n-m$?

In this paper, we discuss $\kappa(Q_n;H)$ and $\kappa^s(Q_n;H)$ for hypercubes $Q_n$ with $n\geq 3$ and $H\in \{P_k,C_k|3\leq k\leq 2^{n-1}\}$. As a by-product, we solve the problem above.

The rest of this paper is organized as follows. In \emph{Section 2} we give the fundamental definitions and theorems. In \emph{Section 3} we discuss $\kappa(Q_n;P_k)$ and $\kappa^s(Q_n;P_k)$ for hypercubes $Q_n$ with $n\geq 3$ and $3\leq k\leq 2^{n-1}$. In \emph{Section 4} we discuss $\kappa(Q_n;C_k)$ and $\kappa^s(Q_n;C_k)$ for hypercubes $Q_n$ with $n\geq 3$ and $4\leq k\leq 2^{n-1}$, $k$ is even and solve the open problem above.

\section{Definitions and preliminaries}
We are consistent with \cite{diestel2012graph} in terms of notations and definitions. Let $G(V,E)$ be an undirected simple graph with vertex set $V$ and edge set $E$, two vertices $u$ and $v$ are \emph{adjacent} if and only if they are two endpoints of some edge $uv$ in $E$. Let $S$ be a subset of $V$, we denote $N_G(S)$ the set of neighborhoods of $S$, particularly $N_G(\{u\})=N_G(u)$ is the set of neighborhoods of $u$. In graph theory, a \emph{path} of length $k-1$ is a sequence of distinct vertices $P_k=(v_0,v_1,\dots,v_{k-1})$ in which $v_iv_{i+1}\in E$ for $i=0,1,\dots,k-2$. A \emph{cycle} of length $l$ is a sequence of vertices $C_l=(v_0,v_1,\dots,v_l)$ in which $v_0=v_l$ and $v_i$ are distinct with $v_iv_{i+1}\in E$ for $i=0,1,\dots,l-1$. An undirected graph $G(V,E)$ is said to be \emph{connected} if and only if it has at least one vertex and there is a path between every pair of its vertices.\par
A \emph{$n$-dimensional hypercube} $Q_n$ is an undirected simple graph with vertex set $V(Q_n)=\{v=x_v^0x_v^1\dots x_v^{n-1}\mid x_v^i=0\;or\;1, i=0,\dots,n-1 \}$ and edge set $E(Q_n)$. Two vertices are adjacent if and only if the $n$-bit binary strings corresponding to them are differ in exactly one bit position. Let $v$ be a vertex of $Q_n$, we denote $(v)^i$ the neighborhood of $v$ whose $n$-bit binary string is differ from $v$'s in exactly the $i^{th}$ bit position for $i=0,1,\dots,n-1$. We notice that $Q_n$ has $2^n$ vertices and $n2^{n-1}$ edges, and it can be divided into two $n-1$-dimentional hypercubes. We set $Q_n^i$ the induced subgraph of $Q_n$ with vertex set $V(Q_n^i)=\{v\mid v\in V(Q_n), x_v^{n-1}=i \}$ for $i=0,1$. It is easy to see that $Q_n^i$ is isomorphic to $Q_{n-1}$.\par
In the following, let's introduce some necessary definitions about the \emph{structure and substructure connectivity}, let $H$ and $G$ be undirected simple graph, $H$ is connected.
\begin{dfn}
	A \emph{$H$-structure cut (resp. $H$-substructure cut)} of $G$ is a set $F$ of connected subgraphs of $G$ whose elements are isomorphic to $H$ \emph{(resp. connected subgraph of $H$)} and $G-V(F)$ is trivial or disconnected.
\end{dfn}
See {\bf Fig 1} and {\bf Fig 2} for an illustration.
\begin{figure}[h] 
	\centering
	\def\svgwidth{300px}
	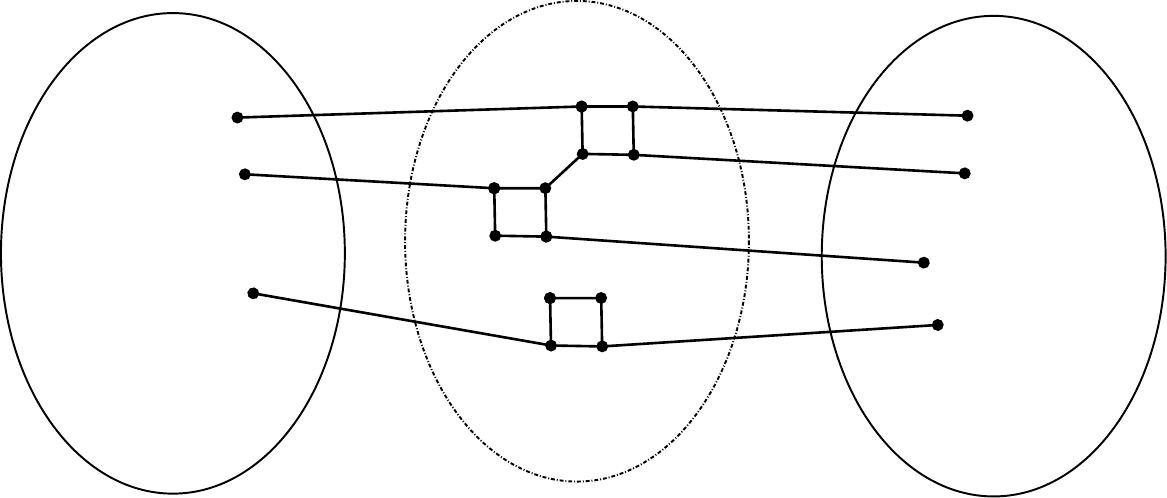
\end{figure}
\begin{center}
	\textbf{Fig 1.} A $C_4$-structure cut
\end{center}
\begin{figure}[h] 
	\centering
	\def\svgwidth{300px}
	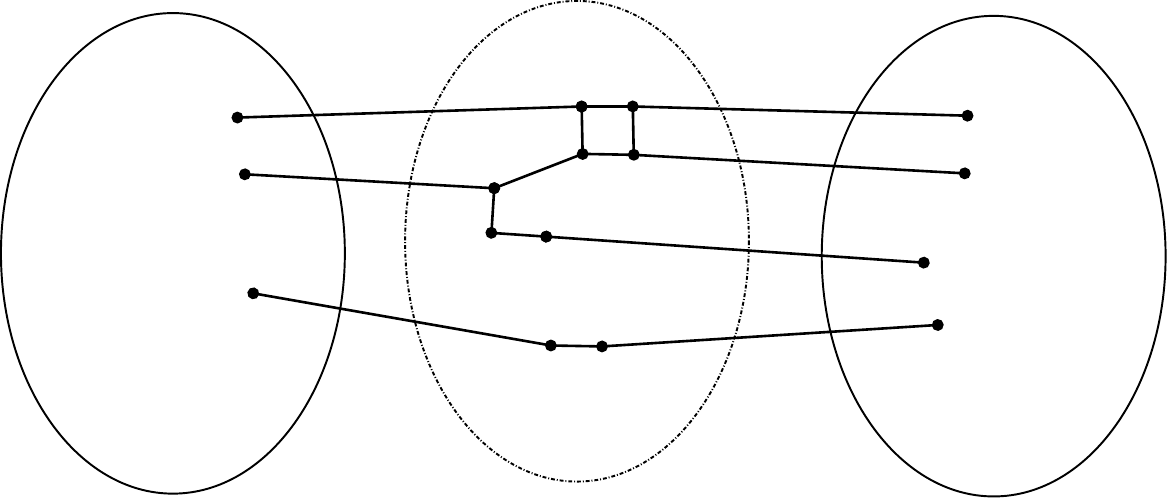
\end{figure}
\begin{center}
	\textbf{Fig 2.} A $C_4$-substructure cut
\end{center}
\begin{dfn}
	The \emph{$H$-structure connectivity (resp. $H$-substructure connectivity)} denote by $\kappa(G;H)$ (\emph{resp.} $\kappa^s(G;H)$) is the minimum cardinality of $H$-structure cuts (\emph{resp.} $H$-substructure cuts).
\end{dfn}
\begin{dfn}(See {\cite{ZHOU2017208}})
	Given a non-negative integer $g$, \emph{$g$-extra connectivity} of $G$ denote by $\kappa_g(G)$ is the minimum cardinality of sets of vertices in $G$ whose deletion disconnects $G$ and leaves each components with at least $g+1$ vertices.
\end{dfn}
\begin{lem}\label{dist2nodes}
	Any two vertices in $V(Q_n)$ with a distance of $2$ have exactly $2$ common neighborhoods.
\end{lem}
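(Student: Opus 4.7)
The plan is to translate the claim into the binary-string model of $Q_n$ and then simply count. Two vertices $u,v$ lie at distance $2$ in $Q_n$ iff their $n$-bit strings differ in exactly two coordinates; call these coordinates $i$ and $j$. A common neighbor $w$ of $u$ and $v$ must differ from $u$ in exactly one position $p$ and from $v$ in exactly one position $q$.

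Next I would split into cases according to whether $p\in\{i,j\}$. If $p=i$, then $w$ is obtained from $u$ by flipping bit $i$, so $w$ and $v$ agree in coordinate $i$ and in every coordinate outside $\{i,j\}$, but still disagree in coordinate $j$; hence $w=(v)^j$ and $w$ is indeed a neighbor of $v$. Symmetrically, $p=j$ gives $w=(v)^i$, another common neighbor. If instead $p\notin\{i,j\}$, then $w$ differs from $v$ in the three coordinates $i,j,p$, so $d(w,v)=3\neq 1$, contradicting the assumption. Thus the only common neighbors are $(u)^i=(v)^j$ and $(u)^j=(v)^i$, and these are clearly distinct because they differ in coordinates $i$ and $j$.

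This gives exactly two common neighbors, which is what the lemma asserts. The whole argument is just bookkeeping with the Hamming-distance description of adjacency in $Q_n$; there is no genuine obstacle, the only thing to be careful about is verifying that the two candidate vertices $(u)^i$ and $(u)^j$ are distinct, which is immediate since $u_i\neq u_j$ is irrelevant here but the strings $(u)^i$ and $(u)^j$ differ from each other in exactly coordinates $i$ and $j$.
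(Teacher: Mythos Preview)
Your argument is correct. The paper's proof takes a different but equally elementary route: it invokes the vertex-transitivity of $Q_n$ to assume without loss of generality that the two vertices are $u=00\cdots0$ and $v=110\cdots0$, and then simply exhibits the two common neighbors $100\cdots0$ and $010\cdots0$. Your approach avoids the appeal to symmetry and instead counts directly in the Hamming-distance model, which has the minor advantage of making explicit \emph{why} no third common neighbor can exist (your case $p\notin\{i,j\}$), whereas the paper leaves this to the word ``obviously''. Both proofs are one-line arguments at heart; neither is materially more general or more powerful than the other.
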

\begin{proof}
	According to the symmetry of the hypercube $Q_n$, without loss of generalicity we assume that the two vertices are $u=000\dots0$ and $v=110\dots0$, obviously their common neighborhoods are $u'=100\dots0$ and $v'=010\dots0$.
\end{proof}
\begin{thm}\label{gextra}(See {\cite{YangExtraconnectivity}})
	If $n\geq 4$, then $\kappa_g(Q_n)=(g+1)n-2g-\binom{g}{2}$ for $0\leq g\leq n-4$, and $\kappa_g(Q_n)=\frac{n(n-1)}{2}$ for $n-3\leq g\leq n$.
\end{thm}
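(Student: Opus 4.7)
The plan is to prove both the upper bound (via an explicit construction) and the matching lower bound (the hard direction). For the upper bound, I would exhibit a $g$-extra cut of cardinality $(g+1)n - 2g - \binom{g}{2}$. Fix an arbitrary vertex $v \in V(Q_n)$ and select any $g$ of its $n$ neighbors $u_1,\dots,u_g$, forming $F = \{v,u_1,\dots,u_g\}$ which induces a copy of $K_{1,g}$. I would take the candidate cut to be $S = N_{Q_n}(F)\setminus F$. Counting: the sum of degrees over $F$ is $(g+1)n$; subtract $2g$ for the $g$ internal edges of the star; and subtract $\binom{g}{2}$ for the overcount arising because, by Lemma \ref{dist2nodes}, any two leaves $u_i,u_j$ (at distance $2$) share exactly two common neighbors, one of which is $v\in F$ and the other of which lies in $S$ and is therefore double-counted in the degree sum. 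This yields $|S| = (g+1)n - 2g - \binom{g}{2}$. I would then verify that $Q_n - S$ splits into $F$ (of size $g+1$) and its complement, and that the complement has size $2^n - (g+1) - \bigl((g+1)n - 2g - \binom{g}{2}\bigr) \ge g+1$ whenever $g\le n-4$ and $n\ge 4$, confirming $S$ is a valid $g$-extra cut.

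For the lower bound I would argue by induction on $n$, using the canonical decomposition $Q_n = Q_{n-1}^0 \cup Q_{n-1}^1$ together with the perfect matching $M$ across the last coordinate. Given any $g$-extra cut $S$ of $Q_n$, set $S_i = S\cap V(Q_{n-1}^i)$ and let $A$ be a component of $Q_n - S$ of smallest order. The case analysis is driven by whether each $Q_{n-1}^i - S_i$ remains connected. If both halves stay connected, the matching $M$ forces $A$ to lie entirely in one half and the cut $S_i$ of that half to kill every $M$-edge leaving $A$, which forces $|S|\ge |N_{Q_{n-1}^i}(A)| + |A|$; bounding this via the induction hypothesis on $Q_{n-1}$ gives the desired inequality. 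If one half, say $Q_{n-1}^0 - S_0$, is disconnected, I would apply the induction hypothesis directly to $Q_{n-1}^0$, combine with the observation that $S_1$ must at minimum separate the $M$-partners of one small component from those of the rest, and again match the target bound.

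The main obstacle is the lower bound, specifically the sub-case in which some small component $A$ has very few vertices (say $g+1 \le |A| \le$ a few times $g$) and is spread across both halves $Q_{n-1}^0$ and $Q_{n-1}^1$. Here one cannot directly invoke the induction hypothesis on either subcube, and one must instead exploit structural facts about small subsets of hypercubes — in particular the classical edge-isoperimetric estimate $|N(X)|\ge |X|\cdot n - 2|E(X)|$ combined with the bound $|E(X)|\le \tfrac12 |X|\log_2|X|$. This lets one push through the bookkeeping when $|A|$ is small; for $|A|$ large, a simple vertex-count comparison against $2^n$ suffices.

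Finally, for the regime $n-3\le g\le n$, I would observe that the formula $\tfrac{n(n-1)}{2}$ is just the value of $(g+1)n - 2g -\binom{g}{2}$ specialised at $g=n-3$, and that for $g\ge n-3$ any $g$-extra cut must remove enough vertices that the only way to still have components of size $\ge g+1$ is to excise at least a coordinate-2 neighbourhood; a direct counting argument together with monotonicity $\kappa_{g}(Q_n)\le \kappa_{g+1}(Q_n)$ and the upper bound from the star construction at $g=n-3$ pins the value at $\tfrac{n(n-1)}{2}$ throughout this range.
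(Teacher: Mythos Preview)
The paper does not prove this theorem at all: it is quoted verbatim from Yang \cite{YangExtraconnectivity} and used only as a black box in the arguments of Sections~3 and~4. There is therefore no in-paper proof to compare your proposal against.

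That said, your outline is broadly the standard one, but it contains a genuine gap in the upper bound for $n-2\le g\le n$. You propose to reuse the star construction at $g=n-3$; however, the small component $F$ it produces has exactly $|F|=(n-3)+1=n-2$ vertices, so the resulting cut is only an $(n-3)$-extra cut and \emph{not} a $g$-extra cut for any $g\ge n-2$. Monotonicity $\kappa_g(Q_n)\le\kappa_{g+1}(Q_n)$ goes the wrong way to rescue this: it transfers lower bounds upward in $g$, not upper bounds. The standard fix is a different construction in this range: delete the $\binom{n}{2}$ vertices of Hamming weight exactly~$2$. This separates the radius-$1$ ball around $0\cdots0$ (which has $n+1$ vertices) from the set of vertices of weight $\ge 3$; for $n\ge 4$ one checks that both parts are connected and have at least $n+1$ vertices, so this is a $g$-extra cut of size $\tfrac{n(n-1)}{2}$ valid for every $g\le n$, giving the missing upper bound.
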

\begin{thm}\label{linsresult}(See {\cite{LinStructure}})
	For $n\geq 4$
	\begin{equation}
	\label{res1}
	\kappa(Q_n;H)=
	\begin{cases}
	n & \text{$H=K_1$,} \\
	n-1 & \text{$H=K_{1,1}$,} \\
	\lceil \frac{n}{2} \rceil & \text{$H\in \{K_{1,2},K_{1,3}\}$,} \\
	n-2 & \text{$H=C_4$.}
	\end{cases}
	\end{equation}
	\begin{equation}
	\label{res2}
	\kappa^s(Q_n;H)=
	\begin{cases}
	n & \text{$H=K_1$,} \\
	n-1 & \text{$H=K_{1,1}$,} \\
	\lceil \frac{n}{2} \rceil & \text{$H\in \{K_{1,2},K_{1,3},C_4\}$.}
	\end{cases}
	\end{equation}
\end{thm}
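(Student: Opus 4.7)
The plan is to exhibit explicit $H$-structure (or $H$-substructure) cuts of the claimed sizes. For $H=K_1$ take the $n$ neighbors of any vertex. For $H=K_{1,1}$, fix an edge $uv$ of $Q_n$; pairing each $x\in N(u)\setminus\{v\}$ with the vertex in $N(v)$ obtained by flipping the coordinate in which $u$ and $v$ differ produces $n-1$ edges of $Q_n$ whose vertex union is exactly $N(\{u,v\})$, and their removal isolates $\{u,v\}$. For $H\in\{K_{1,2},K_{1,3}\}$, isolate a single vertex $w$: by Lemma~\ref{dist2nodes} any two distinct neighbors $x,y$ of $w$ share a second common neighbor $z\ne w$, so $x-z-y$ is a $K_{1,2}$ in $Q_n$ (extended by a further leaf at $z$ for $K_{1,3}$); pairing up $N(w)$ gives $\lceil n/2\rceil$ such structures whose vertex union covers $N(w)$. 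For $H=C_4$ one constructs $n-2$ carefully chosen squares whose vertex union is the neighborhood of a small substructure of $Q_n$ (such as a $P_3$); this is the most delicate construction.

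\textbf{Lower bounds.} Suppose for contradiction that $F$ is an $H$-structure cut with $|F|$ strictly below the claimed value. The bound $|V(F)|\le |F|\cdot |V(H)|$ combined with Theorem~\ref{gextra} pins down the smallest $g$ with $|V(F)|<\kappa_g(Q_n)$; then some component $K$ of $Q_n-V(F)$ has $|V(K)|\le g$, and in particular $N(K)\subseteq V(F)$. When $K=\{v\}$ is a single vertex, $N(v)$ is an independent set of size $n$ in $Q_n$ (bipartite), and since $v\notin V(F)$, no $H_i$ contains $v$; hence every $H_i\in F$ meets $N(v)$ in at most $\alpha(H)$ vertices, forcing $|F|\ge\lceil n/\alpha(H)\rceil$. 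The one subtlety is $H=K_{1,3}$ with $\alpha(H)=3$: applying Lemma~\ref{dist2nodes} to the pairs inside a triple of neighbors of $v$ shows that three distinct elements of $N(v)$ have only $v$ itself as a common neighbor, so a $K_{1,3}\in F$ with all three leaves in $N(v)$ would be centered at $v$, contradicting $v\notin V(F)$. Hence every $K_{1,3}\in F$ contributes at most $2$ vertices to $N(v)$, and $|F|\ge\lceil n/2\rceil$ follows.

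\textbf{Main obstacle.} The hardest case is $H=C_4$, because the single-vertex analysis gives only $|F|\ge\lceil n/2\rceil$, well short of $n-2$ for large $n$. I would walk up the $g$-extra-connectivity hierarchy: for each candidate component size $g+1$, use Theorem~\ref{gextra} to lower-bound $|V(F)|$ by $\kappa_g(Q_n)$, and then bound how many vertices of $N(V(K))$ a single $C_4\subseteq Q_n$ disjoint from $V(K)$ can cover. The key structural ingredient is Lemma~\ref{dist2nodes}: two opposite corners of a $C_4$ are at distance $2$ and hence share exactly $2$ common neighbors, so if a $C_4$ disjoint from $V(K)$ placed two opposite corners in $N(v)$ for a single $v\in V(K)$, its remaining two corners would both have to be among the $2$ common neighbors---one of which is $v$ itself---forcing a contradiction. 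Carrying this argument through the cases $g=0,1,2,\ldots$ gives $|F|\ge n-2$. The substructure version $\kappa^s(Q_n;C_4)=\lceil n/2\rceil$ is much easier: the upper bound is inherited from the $K_{1,2}$-construction (since $K_{1,2}$ is a subgraph of $C_4$), and the lower bound follows uniformly because every connected subgraph of $C_4$ has independence number at most $2$, so the single-vertex covering argument applies to every element of $F$ without modification.
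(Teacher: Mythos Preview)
This theorem is not proved in the present paper: it is quoted verbatim from Lin et al.\ (the ``See \cite{LinStructure}'' tag), and the paper uses it as a black box. There is therefore no in-paper proof to compare your proposal against.

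That said, a brief assessment of your sketch on its own merits. Your framework---explicit cuts for the upper bounds, and for the lower bounds a smallest-component analysis combining Theorem~\ref{gextra} with a count of how many neighbours of the component a single copy of $H$ can cover---is exactly the standard one, and your $K_1$, $K_{1,1}$, $K_{1,2}$, $K_{1,3}$ arguments are correct. Your observation that a $C_4$ in $Q_n$ disjoint from $v$ can contain at most one vertex of $N(v)$ (adjacent pair $\Rightarrow$ triangle; opposite pair $\Rightarrow$ $v$ is forced onto the cycle by Lemma~\ref{dist2nodes}) is right and is the key structural fact.

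There are two genuine gaps, both in the $C_4$ cases. For the structure-connectivity upper bound you only assert that ``$n-2$ carefully chosen squares'' exist without exhibiting them; this construction is not automatic and is where Lin et al.\ do real work. For the substructure lower bound $\kappa^s(Q_n;C_4)\ge\lceil n/2\rceil$ you handle only $|V(C)|=1$; but when $n$ is odd the crude estimate $|V(F)|\le 4(\lceil n/2\rceil-1)=2n-2=\kappa_1(Q_n)$ gives equality, so Theorem~\ref{gextra} alone does not dispose of $|V(C)|\ge 2$ and you need an additional argument (e.g.\ Lemma~\ref{forleqk-1}-style bounds on $|N(\{u,v\})\cap V(H_i)|$). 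Your phrase ``carrying this argument through the cases $g=0,1,2,\ldots$'' for the structure lower bound $n-2$ likewise hides all of the work: the per-$C_4$ covering bound against $N(V(K))$ degrades as $|V(K)|$ grows, and one must actually check that it stays ahead of the $\kappa_g$ threshold.
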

\begin{thm}\label{Manesresult}(See \cite{ManeStructure})
	if $n\geq 4$, then for each $2\leq m \leq n-2$, $\kappa(Q_n;C_{2^m})\leq n-m$.
\end{thm}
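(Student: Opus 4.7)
The plan is to establish the upper bound by exhibiting an explicit $C_{2^m}$-structure cut of cardinality $n-m$. The guiding intuition is to isolate a small $m$-dimensional sub-cube of $Q_n$ by surrounding its entire outer boundary with $n-m$ vertex-disjoint Hamiltonian cycles, one per ``normal direction'', each of which is a copy of $C_{2^m}$.

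Concretely, I would fix $u=00\cdots 0$ and let $A$ be the sub-cube of $Q_n$ consisting of all vertices whose first $n-m$ bits are $0$; then $A\cong Q_m$ and $|A|=2^m$. Each vertex of $A$ has exactly $n-m$ neighbors outside $A$, namely the vertices obtained by flipping one of the first $n-m$ coordinates. For each $j\in\{0,1,\dots,n-m-1\}$ set
\[
A_j \;:=\; \{(v)^j : v\in A\}.
\]
A short coordinate check shows that the $A_j$'s are pairwise disjoint, that each $A_j$ is an $m$-dimensional sub-cube isomorphic to $Q_m$, and that $\bigcup_{j=0}^{n-m-1}A_j = N(A)\setminus A$. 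Since $Q_m$ is Hamiltonian for every $m\geq 2$, I would pick a Hamiltonian cycle $H_j\cong C_{2^m}$ inside each $A_j$ and set $F:=\{H_0,\dots,H_{n-m-1}\}$.

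It then remains to check that $F$ is a valid $C_{2^m}$-structure cut. By construction $V(F)=N(A)\setminus A$, so deleting $V(F)$ separates $A$ from the remaining vertices $V(Q_n)\setminus(A\cup V(F))$. Both parts are non-empty under the hypotheses $m\geq 2$ and $n-m\geq 2$: we have $|A|=2^m\geq 4$, and the complement has $2^n-(n-m+1)2^m$ vertices, which is positive. Therefore $Q_n-V(F)$ is disconnected, $|F|=n-m$, and the stated inequality follows.

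The one step that genuinely requires verification is the coordinate argument showing that the translates $A_j$ are pairwise disjoint and together exhaust $N(A)\setminus A$. This reduces to the observation that the flipped index $j$ lies in $\{0,\dots,n-m-1\}$ while every vertex of $A$ is zero on all those coordinates, so distinct $j$'s produce distinct bit patterns on the ``outer'' coordinates. The Hamiltonicity of $Q_m$ for $m\geq 2$ is classical, and the complement-cardinality calculation is routine; the substantive content is recognizing this clean decomposition of the outer boundary into $m$-sub-cubes, which is precisely what makes $n-m$ the natural upper bound.
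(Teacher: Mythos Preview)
Your construction is correct and is the natural one: isolate an $m$-dimensional subcube $A$ by covering its outer boundary $N(A)\setminus A$ with $n-m$ parallel $m$-subcubes $A_j$, each of which carries a Hamiltonian $C_{2^m}$. The disjointness of the $A_j$ and the identity $\bigcup_j A_j = N(A)\setminus A$ follow exactly as you say from the coordinate description, and the non-emptiness of the complement reduces to $2^{n-m}>n-m+1$ for $n-m\geq 2$, which holds.

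Note, however, that the paper does not supply its own proof of this statement: Theorem~\ref{Manesresult} is quoted from Mane~\cite{ManeStructure} and used as a black box (in Lemma~\ref{kks2^m}). So there is no in-paper argument to compare against. Your proof is essentially the construction one would expect in the cited source, and nothing more is needed here.
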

\begin{lem}\label{TAMI}(See \cite{TAMIZHCHELVAM2019})
	For an integer $n\geq 4$, $\kappa(Q_n;C_6)\geq \lceil \frac{n}{3}\rceil$.
\end{lem}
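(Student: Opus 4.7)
The plan is to bound $|F|$ from below for any $C_6$-structure cut $F$ of $Q_n$ by analyzing how many vertices of $V(F)$ must sit in the neighborhood of a single separated vertex. Two combinatorial observations drive the count. First, $C_6$ is bipartite with parts of size three, so every independent set of vertices inside a single copy of $C_6$ has size at most three. Second, in $Q_n$ the neighborhood $N_{Q_n}(v)$ of any vertex $v$ is an independent set, since any two neighbors of $v$ differ in exactly two bit positions. Together these imply that each $C_6 \subseteq Q_n$ meets $N_{Q_n}(v)$ in at most three vertices.

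Writing $F = \{H_1,\dots,H_m\}$, I would distinguish two cases. If $Q_n - V(F)$ is trivial or contains an isolated vertex $v$, then $N_{Q_n}(v) \subseteq V(F)$, so summing $|V(H_i) \cap N_{Q_n}(v)| \le 3$ over $i$ gives $3m \ge n$, hence $m \ge \lceil n/3 \rceil$. Otherwise every component of $Q_n - V(F)$ has at least two vertices, so $V(F)$ is a $1$-extra vertex cut of $Q_n$, and Theorem~\ref{gextra} with $g=1$ yields $|V(F)| \ge \kappa_1(Q_n) = 2n-2$. Combined with the crude bound $|V(F)| \le 6m$ this gives $m \ge (n-1)/3$, which already equals $\lceil n/3 \rceil$ unless $n \equiv 1 \pmod 3$.

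The delicate subcase will be $n = 3k+1$, where the previous step only yields $m \ge k$ and I must rule out $m = k$. In that extremal situation $|V(F)| = 6m = 2n-2$, the $H_i$'s are forced to be pairwise vertex-disjoint, and $V(F)$ realizes a minimum $1$-extra cut. Letting $A$ be a smallest component of $Q_n - V(F)$, a short vertex-isoperimetric estimate on $Q_n$ (using Lemma~\ref{dist2nodes} to count common neighbors of a $P_3$) gives $|N_{Q_n}(A)| \ge 3n-5 > 2n-2$ whenever $|A| \ge 3$, contradicting $N_{Q_n}(A) \subseteq V(F)$. Thus $|A| = 2$, so $A$ is an edge $\{u,w\}$ and $V(F) = N_{Q_n}(\{u,w\}) \setminus \{u,w\}$. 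A direct bit-flip calculation then identifies the subgraph of $Q_n$ induced on $V(F)$ as the perfect matching $(n-1)K_2$, containing only $n-1 < 2n-2 = 6m$ edges in total. This is too few to host $m$ vertex-disjoint copies of $C_6$, a contradiction.

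The main obstacle, as indicated, is this final edge-count step: verifying that the vertex boundary of an edge in $Q_n$ induces only a matching, so cannot accommodate any $C_6$-packing, let alone one that uses every vertex. Everything else is short independent-set counting together with direct application of the $g$-extra connectivity values from Theorem~\ref{gextra}.
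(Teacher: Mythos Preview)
The paper does not prove this lemma at all; it is quoted from \cite{TAMIZHCHELVAM2019} and used only once, as a black box in Subcase~2.2.3.2 of Lemma~\ref{cyclesubstr} (the single case $n=4$, $k=6$). So there is no proof in the paper to compare against, and you are in effect supplying one from scratch.

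Your outline is sound in its main strokes: the isolated-vertex case via independence of $N_{Q_n}(v)$ together with the bipartition of $C_6$ is correct, and the $|A|=2$ case is handled cleanly, since the induced subgraph on $N_{Q_n}(\{u,w\})$ really is the matching $(n-1)K_2$ and hence contains no cycle whatsoever. The genuine gap is the step where you assert $|N_{Q_n}(A)|\ge 3n-5$ for every smallest component $A$ with $|A|\ge 3$. Counting neighbours of a $P_3$ gives $|N_{Q_n}(P_3)|=3n-5$, but once $A$ properly contains that $P_3$ some of those neighbours may lie in $A$, and the bound can drop. Concretely, in $Q_4$ the star $K_{1,4}$ centred at $0000$ has $|A|=5$ and $|N_{Q_4}(A)|=6=2n-2<3n-5$, so your inequality is false there. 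For $n\ge 7$ with $n\equiv 1\pmod 3$ you can repair the step by invoking Theorem~\ref{gextra} with $g=2$ (all components have $\ge 3$ vertices, so $|V(F)|\ge\kappa_2(Q_n)=3n-5>2n-2$), but for $n=4$ this yields only $\kappa_2(Q_4)=6$, which is no contradiction.

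The patch for $n=4$ is short but of a different flavour: any $C_6$ in $Q_n$ uses exactly three coordinate directions (each an even number of times, and two directions would confine the six distinct vertices to a $Q_2$), so it lies inside a $3$-dimensional subcube. In $Q_4$ the opposite $Q_3$-face is then untouched and connected, and each of the two surviving vertices in the first face is matched to a vertex in the second, so $Q_4-V(C_6)$ is connected outright. This disposes of $n=4$ entirely (making the whole ``no isolated vertex'' branch vacuous there), and with that addition your argument goes through.
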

\begin{thm}\label{hamilton}(See {\cite{SaadTopological}})
	$Q_n$ is Hamiltonian.
\end{thm}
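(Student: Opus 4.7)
The plan is to prove Hamiltonicity of $Q_n$ by induction on the dimension $n$, exploiting the recursive structure already highlighted in Section 2, namely that $Q_n$ decomposes into the two $(n-1)$-dimensional subcubes $Q_n^0$ and $Q_n^1$ joined by the perfect matching $\{v(v)^{n-1}:v\in V(Q_n^0)\}$.

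For the base case I would take $n=2$, where $Q_2$ is itself the $4$-cycle $(00,01,11,10,00)$, which is visibly Hamiltonian. For the inductive step, assume $Q_{n-1}$ contains a Hamilton cycle $C=(u_0,u_1,\dots,u_{2^{n-1}-1},u_0)$. Lift $C$ to the two subcubes: inside $Q_n^0$ this gives the Hamilton cycle $C^0$ obtained by appending the bit $0$ to each $u_i$, and inside $Q_n^1$ it gives the corresponding Hamilton cycle $C^1$ obtained by appending $1$. Because the appending map is an isomorphism that respects the matching in direction $n-1$, the edge $u_0^0 u_1^0 \in E(C^0)$ has its "partner" edge $u_0^1 u_1^1$ in $E(C^1)$, where $u_i^j$ denotes $u_i$ with the bit $j$ appended.

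The decisive step is the standard edge exchange: delete $u_0^0u_1^0$ from $C^0$ and $u_0^1u_1^1$ from $C^1$, then insert the two matching edges $u_0^0u_0^1$ and $u_1^0u_1^1$. What results is a single closed walk visiting every vertex of $V(Q_n^0)\cup V(Q_n^1)=V(Q_n)$ exactly once, i.e.\ a Hamilton cycle of $Q_n$. Verifying that the edges we insert are in $E(Q_n)$ is immediate from the definition of $Q_n$: $u_i^0$ and $u_i^1$ differ only in the last coordinate.

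The only step that requires even a moment's thought is checking that the two corresponding edges really do exist in the lifted cycles; but since both cycles are obtained as literal copies of the same cycle $C$ under the canonical embeddings $Q_{n-1}\hookrightarrow Q_n^0$ and $Q_{n-1}\hookrightarrow Q_n^1$, this is automatic and no edge-transitivity argument is needed. Hence the induction closes and $Q_n$ is Hamiltonian for every $n\ge 2$.
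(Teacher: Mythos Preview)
Your proof is correct and is the classical inductive argument for the Hamiltonicity of $Q_n$. Note, however, that the paper does not actually supply its own proof of this theorem: it is stated as a cited result from \cite{SaadTopological} and used as a black box (in particular in Subcase~1.1 of Lemma~\ref{pathleq} and in Lemma~\ref{oddpathcycle}). Your argument therefore goes beyond what the paper does, but it matches the standard proof one finds in the literature and would serve perfectly well as a self-contained justification.
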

\section{$\kappa(Q_n;P_k)$ and $\kappa^s(Q_n;P_k)$ for $n\geq 3$ and $3\leq k \leq 2^{n-1}$}

In this section we discuss $H$-structure connectivity and $H$-substructure connectivity of $Q_n$ with $H$ the path of length $k-1$.
\begin{lem}\label{pathleq}
	If $n\geq 3$ and $3\leq k \leq 2^{n-1}$, then 
	\begin{equation}    \kappa^s(Q_n;P_k)\leq \kappa(Q_n;P_k)\leq
	\begin{cases}
	\lceil \frac{2n}{k+1} \rceil   &  \text{if $k$ is odd,} \\
	\lceil \frac{2n}{k} \rceil &  \text{if $k$ is even.}
	\end{cases}                
	\end{equation}
\end{lem}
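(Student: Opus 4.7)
The inequality $\kappa^s(Q_n; P_k) \le \kappa(Q_n; P_k)$ is automatic, since any $P_k$-structure cut is trivially a $P_k$-substructure cut. So it suffices to exhibit a $P_k$-structure cut of $Q_n$ of the claimed size. My plan is to isolate the vertex $v = 00\cdots 0$ by covering its $n$ neighbors $e_0, e_1, \ldots, e_{n-1}$ with as few copies of $P_k$ as possible; then $v$ becomes a trivial component of $Q_n - V(F)$, so $F$ is a structure cut.

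Since $N_{Q_n}(v)$ is independent (any two weight-one vertices differ in two bits), no $P_k$-subgraph of $Q_n$ can contain more than $\lceil k/2\rceil$ of these neighbors, which is exactly the target density: $(k+1)/2$ for odd $k$ and $k/2$ for even $k$. To meet this maximum, given $m$ distinct indices $i_1, i_2, \ldots, i_m \in \{0, \ldots, n-1\}$, I use the alternating sequence
\begin{equation*}
e_{i_1},\ e_{i_1}+e_{i_2},\ e_{i_2},\ e_{i_2}+e_{i_3},\ \ldots,\ e_{i_{m-1}}+e_{i_m},\ e_{i_m},
\end{equation*}
where $e_{i_j}+e_{i_{j'}}$ denotes the weight-two vertex with $1$-bits at positions $i_j$ and $i_{j'}$. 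Consecutive vertices differ in exactly one bit, so this is a valid path of $Q_n$ with $2m-1$ distinct vertices, $m$ of which lie in $N_{Q_n}(v)$. For odd $k = 2m-1$ this is already a $P_k$. For even $k = 2m$, I append one more vertex — the closing vertex $e_{i_m}+e_{i_1}$ when $m \geq 3$, or $e_{i_m}+e_j$ for some $j \notin \{i_1, i_2\}$ when $m=2$ — to obtain a $P_k$ still containing $m$ neighbors of $v$. Partitioning $\{0,1,\ldots,n-1\}$ into $\lceil n/m\rceil$ groups of size at most $m$ and building one path per group gives a set $F$ of size exactly $\lceil 2n/(k+1)\rceil$ (odd $k$) or $\lceil 2n/k\rceil$ (even $k$).

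The main subtlety is the last, possibly undersized group. If it contains only $r<m$ new indices, the basic construction yields fewer than $k$ vertices; I plan to pad it by continuing the alternating pattern with indices drawn from previously covered groups, which is permissible because $F$ is a \emph{set} of subgraphs and vertex reuse across different members of $F$ does not affect $|F|$. In the extreme case $m>n$ — equivalently $k \geq 2n-1$, when a single path must carry all $n$ neighbors — the core path has $2n-1$ vertices and I pad by appending weight-three or higher vertices; the Hamiltonicity of $Q_n$ (Theorem~\ref{hamilton}), applied inside $Q_n \setminus \{v\}$, guarantees the existence of such an extension of the exact length $k \leq 2^{n-1}$. A final bookkeeping check — that $N_{Q_n}(v) \subseteq V(F)$ while $v \notin V(F)$ (our paths use only weight-one and weight-two, or higher, vertices), and that $|V(F)| \leq k\cdot|F| < 2^n - 1$ so that $Q_n - V(F)$ is nontrivial — completes the argument.
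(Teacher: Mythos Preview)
Your approach is essentially identical to the paper's: isolate $v=0\cdots0$ by covering $N(v)=\{e_0,\ldots,e_{n-1}\}$ with $\lceil n/m\rceil$ copies of the alternating path $e_{i_1},\,e_{i_1}{+}e_{i_2},\,e_{i_2},\ldots$, handle the short last block by reusing already-covered indices (the paper does the equivalent thing by shifting the last path to indices $n-m,\ldots,n-1$), and for even $k$ append one extra weight-two vertex.

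The one place your writeup is looser than the paper's is the large-$k$ extension. Invoking Theorem~\ref{hamilton} ``inside $Q_n\setminus\{v\}$'' does not by itself do what you need: you must extend the \emph{specific} $(2n-1)$-vertex core path from its endpoint $e_{n-1}$, not merely exhibit some long path avoiding $v$, and the neighbors of $e_{n-1}$ other than $v$ are weight-two vertices, most of which already appear in the core. The paper resolves this cleanly by noting that the last two core vertices $((v)^{n-2})^{n-1}$ and $(v)^{n-1}$ lie in $Q_n^1$ and form an edge there; it then extends along a Hamiltonian cycle of $Q_n^1$ through that edge. The extension stays in $Q_n^1$, hence is automatically disjoint from the first $2n-3$ core vertices (which lie in $Q_n^0$) and from $v$, and supplies up to $2^{n-1}-2$ further vertices, which suffices for all $k\le 2^{n-1}$.
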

\begin{proof}
	We set $v=00\dots0$ being a vertex in $Q_n$.
\vskip .2cm
{\bf Case 1.} $k$ is odd.
\vskip .2cm
{\bf Subcase 1.1.} $k\geq 2n-1$.
We notice that there is a hamiltonian cycle in $Q_n^1$ with ${({(v)}^{n-2})}^{n-1}(v)^{n-1}$ as one of its edges by Theorem \ref{hamilton} and the symmetry of $Q_n^1$. For each $i=1,2,\dots,2^{n-1}-2$, set $u_i$ the $i^{th}$ vertex after ${\left({\left(v\right)}^{n-2}\right)}^{n-1}$ and $(v)^{n-1}$ along the hamiltonian cycle. Then $F=\left\{\left((v)^0,{\left(\left(v\right)^0\right)}^1,(v)^1,\dots,{(v)}^{n-2},{\left({\left(v\right)}^{n-2}\right)}^{n-1},(v)^{n-1},u_1,\dots,u_{k-(2n-1)}\right)\right\}$ is a $P_k$-structure cut of $Q_n$ leaves at least two connected components in $Q_n-V(F)$ and $\{v\}$ is one of them. Therefore $\kappa(Q_n;P_k)\leq 1=\lceil \frac{2n}{k+1} \rceil$.
\begin{figure}[h] 
	\centering
	\def\svgwidth{300px}
\begingroup%
  \makeatletter%
  \providecommand\color[2][]{%
    \errmessage{(Inkscape) Color is used for the text in Inkscape, but the package 'color.sty' is not loaded}%
    \renewcommand\color[2][]{}%
  }%
  \providecommand\transparent[1]{%
    \errmessage{(Inkscape) Transparency is used (non-zero) for the text in Inkscape, but the package 'transparent.sty' is not loaded}%
    \renewcommand\transparent[1]{}%
  }%
  \providecommand\rotatebox[2]{#2}%
  \newcommand*\fsize{\dimexpr\f@size pt\relax}%
  \newcommand*\lineheight[1]{\fontsize{\fsize}{#1\fsize}\selectfont}%
  \ifx\svgwidth\undefined%
    \setlength{\unitlength}{299.28257439bp}%
    \ifx\svgscale\undefined%
      \relax%
    \else%
      \setlength{\unitlength}{\unitlength * \real{\svgscale}}%
    \fi%
  \else%
    \setlength{\unitlength}{\svgwidth}%
  \fi%
  \global\let\svgwidth\undefined%
  \global\let\svgscale\undefined%
  \makeatother%
  \begin{picture}(1,0.44653062)%
    \lineheight{1}%
    \setlength\tabcolsep{0pt}%
    \put(0,0){\includegraphics[width=\unitlength,page=1]{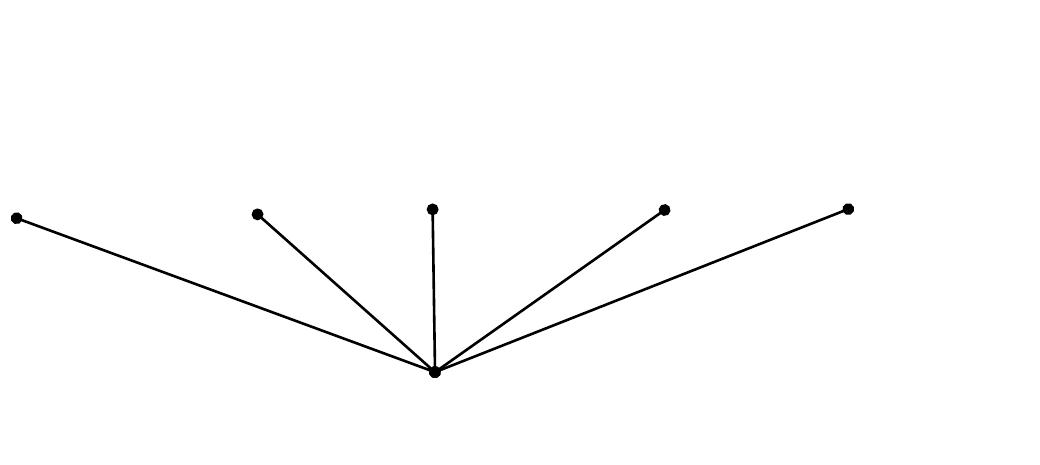}}%
    \put(0.39260019,0.04892125){\color[rgb]{0,0,0}\makebox(0,0)[lt]{\lineheight{1.25}\smash{\begin{tabular}[t]{l}$v$\end{tabular}}}}%
    \put(0.52684991,0.25879806){\color[rgb]{0,0,0}\makebox(0,0)[lt]{\lineheight{1.25}\smash{\begin{tabular}[t]{l}$\dots$\end{tabular}}}}%
    \put(0,0){\includegraphics[width=\unitlength,page=2]{pathleq1.pdf}}%
    \put(0.04578868,0.06637372){\color[rgb]{0,0,0}\makebox(0,0)[lt]{\lineheight{1.25}\smash{\begin{tabular}[t]{l}$Q_n^0$\end{tabular}}}}%
    \put(0.7738863,0.14718796){\color[rgb]{0,0,0}\makebox(0,0)[lt]{\lineheight{1.25}\smash{\begin{tabular}[t]{l}$Q_n^1$\end{tabular}}}}%
  \end{picture}%
\endgroup%

\end{figure}
\begin{center}
	 
\end{center}
\begin{center}
	\textbf{Fig 3.} $k\geq 2n-1$
\end{center}
\vskip .2cm

{\bf Subcase 1.2.} $1\leq k\leq 2n-1$.
\vskip .2cm
{\bf Subcase 1.2.1.}$\frac{k+1}{2}$ divides $n$.
We set $F=\{P_k^i|i=0,1,\dots,\frac{2n}{k+1}-1\}$ with \\ $P_k^i=\left((v)^{i\frac{k+1}{2}},{\left(\left(v\right)^{i\frac{k+1}{2}}\right)}^{i\frac{k+1}{2}+1},(v)^{i\frac{k+1}{2}+1},\dots,(v)^{(i+1)\frac{k+1}{2}-1}\right)$. Then $F$ is obviously a $P_k$-structure cut of $Q_n$, implies $\kappa(Q_n;P_k)\leq \frac{2n}{k+1}=\lceil \frac{2n}{k+1} \rceil$.

\vskip .2cm
{\bf Subcase 1.2.2.}$\frac{k+1}{2}$ does not divide $n$.
We set $F=\{P_k^i|i=0,1,\dots,\lceil \frac{2n}{k+1} \rceil-1\}$ with\\ $P_k^i=\left((v)^{i\frac{k+1}{2}},{\left(\left(v\right)^{i\frac{k+1}{2}}\right)}^{i\frac{k+1}{2}+1},(v)^{i\frac{k+1}{2}+1},\dots,(v)^{(i+1)\frac{k+1}{2}-1}\right)$ for $i=0,\dots,\lceil \frac{2n}{k+1} \rceil-2$ and $P_k^{\lceil \frac{2n}{k+1} \rceil-1}=\left((v)^{n-\frac{k+1}{2}},{\left(\left(v\right)^{n-\frac{k+1}{2}}\right)}^{n-\frac{k+1}{2}+1},(v)^{n-\frac{k+1}{2}+1},\dots,(v)^{n-1}\right)$. Then $F$ is obviously a $P_k$-structure cut of $Q_n$, implies $\kappa(Q_n;P_k)\leq \lceil \frac{2n}{k+1} \rceil$.\\
Let's have $n=5$ and $k=3$ as an example:
\begin{figure}[h] 
	\centering
	\def\svgwidth{252px}
\begingroup%
  \makeatletter%
  \providecommand\color[2][]{%
    \errmessage{(Inkscape) Color is used for the text in Inkscape, but the package 'color.sty' is not loaded}%
    \renewcommand\color[2][]{}%
  }%
  \providecommand\transparent[1]{%
    \errmessage{(Inkscape) Transparency is used (non-zero) for the text in Inkscape, but the package 'transparent.sty' is not loaded}%
    \renewcommand\transparent[1]{}%
  }%
  \providecommand\rotatebox[2]{#2}%
  \newcommand*\fsize{\dimexpr\f@size pt\relax}%
  \newcommand*\lineheight[1]{\fontsize{\fsize}{#1\fsize}\selectfont}%
  \ifx\svgwidth\undefined%
    \setlength{\unitlength}{214.63392827bp}%
    \ifx\svgscale\undefined%
      \relax%
    \else%
      \setlength{\unitlength}{\unitlength * \real{\svgscale}}%
    \fi%
  \else%
    \setlength{\unitlength}{\svgwidth}%
  \fi%
  \global\let\svgwidth\undefined%
  \global\let\svgscale\undefined%
  \makeatother%
  \begin{picture}(1,0.51828283)%
    \lineheight{1}%
    \setlength\tabcolsep{0pt}%
    \put(0,0){\includegraphics[width=\unitlength,page=1]{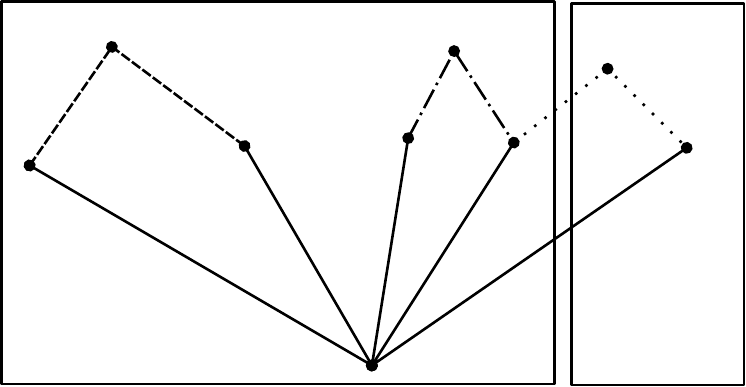}}%
    \put(0.51903156,0.01547508){\color[rgb]{0,0,0}\makebox(0,0)[lt]{\lineheight{1.25}\smash{\begin{tabular}[t]{l}$v$\end{tabular}}}}%
    \put(0.04792214,0.05229016){\color[rgb]{0,0,0}\makebox(0,0)[lt]{\lineheight{1.25}\smash{\begin{tabular}[t]{l}$Q_5^0$\end{tabular}}}}%
    \put(0.7848496,0.04417813){\color[rgb]{0,0,0}\makebox(0,0)[lt]{\lineheight{1.25}\smash{\begin{tabular}[t]{l}$Q_5^1$\end{tabular}}}}%
  \end{picture}%
\endgroup%

\end{figure}

\begin{center}
	\textbf{Fig 4.} $n=5$ and $k=3$
\end{center}
\vskip .2cm

{\bf Case 2.} $k$ is even.
\vskip .2cm
{\bf Subcase 2.1.} $k\geq 2n$. The proof is as same as {\bf Subcase 1.1.}
\vskip .2cm
{\bf Subcase 2.2.} $2\leq k\leq 2n-2$.
\vskip .2cm
{\bf Subcase 2.2.1.} $\frac{k}{2}$ divides $n$.
We set $F=\{P_k^i|i=0,1,\dots,\frac{2n}{k}-1\}$ with \\ $P_k^i=\left((v)^{i\frac{k}{2}},{\left(\left(v\right)^{i\frac{k}{2}}\right)}^{i\frac{k}{2}+1},(v)^{i\frac{k}{2}+1},\dots,(v)^{(i+1)\frac{k}{2}-1},{\left(\left(v\right)^{(i+1)\frac{k}{2}-1}\right)}^{(i+1)\frac{k}{2}}\right)$ for $i=0,\dots, \frac{2n}{k}-2$ and $P_k^{\frac{2n}{k}-1}=\left((v)^{\left(\frac{2n}{k}-1\right)\frac{k}{2}},{\left(\left(v\right)^{\left(\frac{2n}{k}-1\right)\frac{k}{2}}\right)}^{\left(\frac{2n}{k}-1\right)\frac{k}{2}+1},(v)^{\left(\frac{2n}{k}-1\right)\frac{k}{2}+1},\dots,(v)^{n-1},{\left(\left(v\right)^{n-1}\right)}^{0}\right)$.  Then $F$ is obviously a $P_k$-structure cut of $Q_n$, implies $\kappa(Q_n;P_k)\leq \frac{2n}{k}=\lceil \frac{2n}{k} \rceil$.

\vskip .2cm
{\bf Subcase 2.2.2.} $\frac{k}{2}$ does not divide $n$.
We set $F=\{P_k^i|i=0,1,\dots,\lceil \frac{2n}{k} \rceil-1\}$ with \\ $P_k^i=\left((v)^{i\frac{k}{2}},{\left(\left(v\right)^{i\frac{k}{2}}\right)}^{i\frac{k}{2}+1},(v)^{i\frac{k}{2}+1},\dots,(v)^{(i+1)\frac{k}{2}-1},{\left(\left(v\right)^{(i+1)\frac{k}{2}-1}\right)}^{(i+1)\frac{k}{2}}\right)$ for $i=0,\dots,\lceil \frac{2n}{k} \rceil-2$ and $P_k^{\lceil \frac{2n}{k} \rceil-1}=\left((v)^{n-\frac{k}{2}},{\left(\left(v\right)^{n-\frac{k}{2}}\right)}^{n-\frac{k}{2}+1},(v)^{n-\frac{k}{2}+1},\dots,(v)^{n-1},{\left(\left(v\right)^{n-1}\right)}^0\right)$. Then $F$ is obviously a $P_k$-structure cut of $Q_n$, implies $\kappa(Q_n;P_k)\leq \lceil \frac{2n}{k} \rceil$.
\end{proof}

\begin{lem}\label{forleqk-1}
	For $n\geq 3$ and $3\leq k \leq 2^{n-1}$, let $k=3q_k+r_k$ with non-negative integers $q_k$ and $r_k$, $0\leq r_k\leq 2$. If $u$ and $v$ are adjacent in $Q_n-V(P_k)$, then $|N_{Q_n}(\{u,v\})\cap V(P_k)|\leq 2q_k+r_k$. In particular,  $|N_{Q_n}(\{u,v\})\cap V(P_k)|\leq k-1$.
\end{lem}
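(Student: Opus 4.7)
The plan is to bound $|N_{Q_n}(\{u,v\})\cap V(P_k)|$ via two structural observations about neighborhoods along the path and then a short block-counting argument. Throughout, write $P_k=(w_0,w_1,\dots,w_{k-1})$.

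First I would record two easy consequences of the fact that $Q_n$ is bipartite and therefore triangle-free. Since $u\sim v$, any common neighbor of $u$ and $v$ would close a triangle, so $N_{Q_n}(u)\cap N_{Q_n}(v)=\emptyset$; hence the quantity to bound decomposes as the disjoint union $(N_{Q_n}(u)\cap V(P_k))\sqcup(N_{Q_n}(v)\cap V(P_k))$. Similarly, two consecutive path vertices $w_j,w_{j+1}$ cannot both be adjacent to $u$ (else $u,w_j,w_{j+1}$ would be a triangle), and likewise for $v$.

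The crux of the argument is to upgrade these observations to the following: among any three consecutive path vertices $w_{i-1},w_i,w_{i+1}$, at most two lie in $N_{Q_n}(\{u,v\})$. Suppose for contradiction all three do. By the two observations above, their memberships must alternate between $N_{Q_n}(u)$ and $N_{Q_n}(v)$; by symmetry assume $w_{i-1},w_{i+1}\in N_{Q_n}(u)$ and $w_i\in N_{Q_n}(v)$. Then $u$ and $w_i$ are at distance exactly $2$ in $Q_n$ (at most $2$ via the walk $u,v,w_i$, and not $1$ since $w_i\notin N_{Q_n}(u)$). Lemma \ref{dist2nodes} therefore gives that $u$ and $w_i$ share exactly two common neighbors, one of which is $v$. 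But $w_{i-1}$ and $w_{i+1}$ are both common neighbors of $u$ and $w_i$ (each is in $N_{Q_n}(u)$ and each is adjacent to $w_i$ on the path), and neither of them is $v$ since $v\notin V(P_k)$. Thus $w_{i-1}$ and $w_{i+1}$ must both equal the unique other common neighbor, forcing $w_{i-1}=w_{i+1}$ and contradicting the distinctness of path vertices.

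With the three-in-a-row claim in hand, I would partition $V(P_k)$ into $q_k$ consecutive blocks of length $3$ followed by a tail of $r_k\le 2$ vertices. Each full block contributes at most $2$ to $|N_{Q_n}(\{u,v\})\cap V(P_k)|$ by the key claim, and the tail contributes at most $r_k$, yielding the announced bound $2q_k+r_k$. For the ``in particular'' statement, note $2q_k+r_k=k-q_k$, and the hypothesis $k\ge 3$ combined with $r_k\le 2$ forces $q_k\ge 1$, so $2q_k+r_k\le k-1$. The only non-routine step is the three-in-a-row argument, and this is precisely where Lemma \ref{dist2nodes} is essential: bipartiteness alone would give only the weaker bound $\lceil k/2\rceil+\lceil k/2\rceil$, and the tight factor $2/3$ per vertex genuinely requires the ``exactly two common neighbors at distance two'' property of $Q_n$.
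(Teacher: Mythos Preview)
Your proof is correct and follows essentially the same approach as the paper: both establish the key ``three-in-a-row'' claim (at most two of any three consecutive path vertices lie in $N_{Q_n}(\{u,v\})$) by the same contradiction via Lemma~\ref{dist2nodes}, and then derive the bound $2q_k+r_k$. The only cosmetic difference is that the paper packages the final count as an induction on $k$ (peeling off one vertex when $r_{l+1}\in\{1,2\}$ and three vertices when $r_{l+1}=0$), whereas you partition $V(P_k)$ directly into $q_k$ blocks of three plus a tail of $r_k$; your version is a bit more transparent, and your preliminary remarks on alternation explicitly justify the ``without loss of generality'' step that the paper leaves implicit.
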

\begin{proof}
	Set the vertices in $V(P_k)$ in turn as $v_i$ for $i=1,2,\dots,k$. First of all, we prove $|N_{Q_n}(\{u,v\})\cap \{v_{i-1},v_i,v_{i+1}\}|\leq 2$ for $i=2,3,\dots,k-1$ by contradiction. Suppose $|N_{Q_n}(\{u,v\})\cap \{v_{i'-1},v_{i'},v_{i'+1}\}|=3$ for some integer $2\leq i'\leq k-1$, without loss of generality, we assume $u$ is adjacent to $v_{i'-1}$ and $v_{i'+1}$ and $v$ is adjacent to $v_{i'}$. Then $d(u,v_{i'})=2$ but $u$ and $v_{i'}$ have $3$ common neighborhoods: $v$,$v_{i'-1}$,$v_{i'+1}$, leads a contradiction by Lemma \ref{dist2nodes}.

	Then we prove the lemma by induction on $k$. For $k=3$, $|N_{Q_n}(\{u,v\})\cap \{v_{1},v_2,v_{3}\}|\leq 2$ by the statement above. Assume that $|N_{Q_n}(\{u,v\})\cap V(P_k)|\leq 2q_k+r_k$ is true for $3\leq k \leq l$ with integer $3\leq l \leq 2^{n-1}-1$. For $k=l+1$, $|N_{Q_n}(\{u,v\})\cap V(P_{l+1})|\leq |N_{Q_n}(\{u,v\})\cap V(P_{l})|+1\leq 2q_l+r_l+1$ by inductive hypothesis.
	\vskip .2cm
	{\bf Case 1.} $r_{l+1}=1$ and $2$. Notice that in this case $r_{l+1}=r_{l}+1$ and $q_{l+1}=q_{l}$, so we have $|N_{Q_n}(\{u,v\})\cap V(P_{l+1})|\leq 2q_l+r_l+1=2q_{l+1}+r_{l+1}$.
	\vskip .2cm
	{\bf Case 2.} $r_{l+1}=0$ and $l\geq 5$. Since $|N_{Q_n}(\{u,v\})\cap \{v_{i-1},v_i,v_{i+1}\}|\leq 2 (i=2,3,\dots,k-1)$ we have $|N_{Q_n}(\{u,v\})\cap V(P_{l+1})|\leq |N_{Q_n}(\{u,v\})\cap V(P_{l-2})|+2\leq 2q_{l-2}+2\leq 2q_{l+1}$.\\
	In particular, $|N_{Q_n}(\{u,v\})\cap V(P_k)|\leq 2q_k+r_k=k-q_k\leq k-1$.
	
\end{proof}

\begin{lem}\label{pathgeq}
	For $n\geq 3$ and $3\leq k\leq 2^{n-1}$, 
	\begin{equation}    \kappa^s(Q_n;P_k)\geq
	\begin{cases}
	\lceil \frac{2n}{k+1} \rceil   &  \text{if $k$ is odd,} \\
	\lceil \frac{2n}{k} \rceil &  \text{if $k$ is even.}
	\end{cases}                
	\end{equation}
\end{lem}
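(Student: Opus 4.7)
My plan is to show that any $P_k$-substructure cut $F=\{P'_1,\dots,P'_m\}$ of $Q_n$ satisfies the stated lower bound on $m=|F|$, by a case analysis on the size of a smallest component $C$ of $Q_n-V(F)$.

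In the case $|C|=1$, the unique vertex $u$ of $C$ has $N_{Q_n}(u)\subseteq V(F)$ and $|N_{Q_n}(u)|=n$. Because $Q_n$ is bipartite, $N_{Q_n}(u)$ is an independent set, and therefore its intersection with any sub-path $P'_i$ (on at most $k$ vertices) is an independent set of that path and has size at most $\lceil k/2\rceil$. Summing over $i$ yields $n\le m\lceil k/2\rceil$, which rearranges to $m\ge 2n/(k+1)$ when $k$ is odd and $m\ge 2n/k$ when $k$ is even; taking ceilings matches the target exactly.

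In the case $|C|\ge 2$, I pick any edge $uv$ of $C$ and exploit Lemma~\ref{forleqk-1}. Bipartiteness gives $N_{Q_n}(u)\cap N_{Q_n}(v)=\emptyset$ and hence $|N_{Q_n}(\{u,v\})|=2n-2$, of which at most $|C|-2$ vertices lie inside $C$, so at least $2n-|C|$ lie in $V(F)$. Applying Lemma~\ref{forleqk-1} to each $P'_i$ on $k_i\le k$ vertices and using the easy monotonicity $2q_{k_i}+r_{k_i}\le 2q_k+r_k$, one gets $|N_{Q_n}(\{u,v\})\cap V(P'_i)|\le 2q_k+r_k$. When $|C|=2$ this gives $2n-2\le m(2q_k+r_k)$, and an elementary arithmetic comparison (using $n\ge 3$ and $2q_k+r_k=(2k+r_k)/3$) shows that the resulting integer lower bound is at least $\lceil 2n/(k+1)\rceil$ or $\lceil 2n/k\rceil$. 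When $|C|\ge 3$, every component of $Q_n-V(F)$ has at least three vertices, so Theorem~\ref{gextra} forces $|V(F)|\ge \kappa_2(Q_n)$, and combined with $|V(F)|\le mk$ one obtains another lower bound on $m$ that, after one more short check, again meets the target.

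The main obstacle is the arithmetic bookkeeping in the case $|C|\ge 2$: verifying, in every residue class of $2n$ modulo $k$ (or $k+1$), that the lower bounds coming from Lemma~\ref{forleqk-1} and from the $2$-extra connectivity genuinely meet the required ceilings. A handful of small instances need a direct check — in particular $n=3$, where Theorem~\ref{gextra} is not stated, and pairs such as $(n,k)=(4,6)$ where the $\kappa_2$ bound alone is too weak and one must argue by hand that no single path on at most $k$ vertices can be the vertex set of a cut of $Q_n$ whose components are all of size at least three.
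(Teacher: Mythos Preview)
Your proposal is correct and follows the same architecture as the paper: assume $|F|$ is below the target, take a smallest component $C$ of $Q_n-V(F)$, and split on $|V(C)|$. The case $|V(C)|=1$, the even-$k$ subcases $|V(C)|=2$ (via Lemma~\ref{forleqk-1}) and $|V(C)|\ge3$ (via $\kappa_2$ from Theorem~\ref{gextra}), and the hand checks at $n=3$ and $(n,k)=(4,6)$ are exactly what the paper does.

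The one genuine difference is how you treat odd $k$ with $|V(C)|\ge2$. You subdivide into $|V(C)|=2$ (using the sharp bound $2q_k+r_k$ from Lemma~\ref{forleqk-1}) and $|V(C)|\ge3$ (using $\kappa_2$), and this is precisely what produces the ``arithmetic bookkeeping'' you flag as the main obstacle. The paper bypasses all of that: for odd $k$ and $|V(C)|\ge2$ it simply invokes $\kappa_1(Q_n)=2n-2$ for $n\ge5$ (and $\kappa_1(Q_4)=6$) and observes
\[
|V(F)|\le k\bigl(\lceil 2n/(k+1)\rceil-1\bigr)\le \tfrac{k}{k+1}(2n-2)<2n-2,
\]
a contradiction in one line. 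Thus for odd $k$ the paper never needs Lemma~\ref{forleqk-1} or $\kappa_2$ at all. Your route is valid, but you are doing more residue-class arithmetic than necessary; adopting the $\kappa_1$ shortcut for the odd case would eliminate most of the residual verification you mention.
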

\begin{proof}
	Let $F=\{P_1^{1},P_1^{2},\dots,P_1^{a_1},P_2^{1},\dots,P_2^{a_2},\dots,P_k^{1},\dots,P_k^{a_k}\}$ be a set of paths in $Q_n$ of length less than $k-1$.
	
	For $n=3$, $k$ must be $3$ or $4$. Suppose to the contrary that $Q_3-V(F)$ is disconnected with $|F|\leq \lceil \frac{6}{k} \rceil -1=1$ $(k=3,4)$. If $k=3$, then $Q_3-V(F)$ is connected by Theorem \ref{linsresult}. If $k=4$ and $a_4=0$, then $Q_3-V(F)$ is connected by Theorem \ref{linsresult}. If $a_4=1$, let $F=\{P_4\}$ and all possible cases of $P_4$ are shown in {\bf Fig 5} by the symmetry of $Q_3$, and we can see $Q_3-V(F)$ is connected.
	\begin{center}
		\begin{figure}[H] 
			\centering
			\def\svgwidth{250px}
			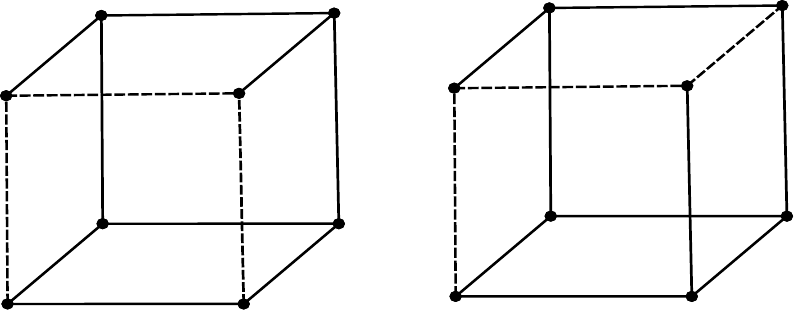
		\end{figure}
		\begin{center}
			\textbf{Fig 5.} 
		\end{center}
	\end{center}
	
	For $n\geq 4$, we have the following cases.

	{\bf Case 1.} $k$ is odd.
	We prove that for $|F|\leq \lceil \frac{2n}{k+1} \rceil-1$, $Q_n-V(F)$ is connected by contradiction. Suppose $Q_n-V(F)$ is disconnected, let $C$ be one of the smallest components of it, that means components who have the smallest number of vertices.
	\vskip .2cm
	{\bf Subcase 1.1.} $|V(C)|=1$. Let $V(C)=\{u\}$, each path in $F$ contains at most $\frac{k+1}{2}$ neighbors of $u$ since there is no $3$-cycle in $Q_n$. Therefore $\frac{k+1}{2}|F|\geq n$ i.e. $\frac{k+1}{2}(\lceil \frac{2n}{k+1} \rceil-1)\geq n$, a contradiction with $\frac{k+1}{2}(\lceil \frac{2n}{k+1} \rceil-1)< n$.
	
	\vskip .2cm
	{\bf Subcase 1.2.} $|V(C)|\geq 2$. 
	For $n=4$, we have $|V(F)|\leq k(\lceil \frac{8}{k+1} \rceil-1)<6$, a contradiction with $\kappa_1(Q_4)=\frac{4(4-1)}{2}=6$ by Theorem \ref{gextra}. For $n\geq 5$, we have
 $\kappa_1(Q_n)=2n-2$ by Theorem \ref{gextra}, the number of deleted vertices should be at least $2n-2$ i.e. $|V(F)|\geq 2n-2$. But $|V(F)|\leq k(\lceil \frac{2n}{k+1} \rceil-1)\leq k(\frac{2n+k-1}{k+1}-1)=\frac{k}{k+1}(2n-2)<2n-2$, a contradiction.
	\vskip .2cm
	{\bf Case 2.} $k$ is even.
	We prove that for $|F|\leq \lceil \frac{2n}{k} \rceil-1$, $Q_n-V(F)$ is connected by contradiction. Suppose $Q_n-V(F)$ is disconnected, let $C$ be one of the smallest components of it.
	
	\vskip .2cm
	{\bf Subcase 2.1.} $|V(C)|=1$. 
	Let $V(C)=\{u\}$, each path in $F$ contains at most $\frac{k}{2}$ neighbors of $u$. Therefore $\frac{k}{2}|F|\geq n$ i.e. $\frac{k}{2}(\lceil \frac{2n}{k} \rceil-1)\geq n$, a contradiction with $\frac{k}{2}(\lceil \frac{2n}{k} \rceil-1)< n$.
	
	\vskip .2cm
	{\bf Subcase 2.2.} $|V(C)|=2$. Let $V(C)=\{u,v\}$, it is obvious that $|N_{Q_n}(\{u,v\})|=2n-2$. For each path $P \in F$, we have $|N_{Q_n}(\{u,v\})\cap V(P)|\leq k-1$ by Lemma \ref{forleqk-1}. Therefore $F$ must has at least $\lceil \frac{2n-2}{k-1} \rceil$ elements, but $|F|\leq \lceil \frac{2n}{k} \rceil -1\leq \frac{2n+k-2}{k}-1=\frac{2n-2}{k}<\lceil \frac{2n-2}{k-1} \rceil$, a contradiction.

	\vskip .2cm
	{\bf Subcase 2.3.} $|V(C)|\geq 3$. 
	
	\vskip .2cm
	{\bf Subcase 2.3.1.} For $n\geq 6$, we have $|V(F)|\leq k(\lceil \frac{2n}{k} \rceil-1)<3n-5=\kappa_2(Q_n)$, a contradiction.

	\vskip .2cm
	{\bf Subcase 2.3.2.}
	For $n=5$, we have $|V(F)|\leq k(\lceil \frac{10}{k} \rceil-1)<\frac{5(5-1)}{2}=10=\kappa_2(Q_5)$, a contradiction.

	\vskip .2cm
	{\bf Subcase 2.3.3.}
	For $n=4$ and $k=4,8$, we have $|V(F)|\leq k(\lceil \frac{8}{k} \rceil-1)<\frac{4(4-1)}{2}=6=\kappa_2(Q_4)$, a contradiction.
\begin{table}[H]
	\centering
	\begin{tabular}{|c|c|c|cccc}
		\cline{1-3} \cline{5-7}
		$k$ &
		$k(\lceil \frac{10}{k} \rceil-1)$ &
		$\kappa_2(Q_5)$ &
		\multicolumn{1}{c|}{} &
		\multicolumn{1}{c|}{$k$} &
		\multicolumn{1}{c|}{$k(\lceil \frac{8}{k} \rceil-1)$} &
		\multicolumn{1}{c|}{$\kappa_2(Q_4)$} \\ \cline{1-3} \cline{5-7} 
		$4$ &
		$8$ &
		\multirow{4}{*}{$10$} &
		\multicolumn{1}{c|}{} &
		\multicolumn{1}{c|}{$4$} &
		\multicolumn{1}{c|}{$4$} &
		\multicolumn{1}{c|}{\multirow{2}{*}{$6$}} \\ \cline{1-2} \cline{5-6}
		$6$       & $6$ &  & \multicolumn{1}{c|}{} & \multicolumn{1}{c|}{$8$} & \multicolumn{1}{c|}{$0$} & \multicolumn{1}{c|}{} \\ \cline{1-2} \cline{5-7} 
		$8$       & $8$ &  &                       &                          &                          &                       \\ \cline{1-2}
		$\geq 10$ & $0$ &  &                       &                          &                          &                       \\ \cline{1-3}
	\end{tabular}
\end{table}

For $k=6$, we have $\lceil \frac{8}{6} \rceil-1=1$. If $a_6=0$, we can think of $F$ as a $P_5$-substructure cut and $Q_4-V(F)$ is connected by {\bf Case 1}, a contradiction.

If $a_6=1$, let $F=\{P_6\}$, without loss of generality, we assume that the first $3$ vertices of $P_6$ are $u=0000$, $v=1000$ and $w=1100$ by the symmetry of $Q_4$. If $|V(P_6)\cap V(Q_4^1)|=0$, then $Q_4^1-V(F)$ is connected, therefore $Q_4-V(F)$ is connected since there is a perfect matching between $Q_4^0$ and $Q_4^1$, a contradiction. If $|V(P_6)\cap V(Q_4^1)|=1$, let $V(P_6)\cap V(Q_4^1)=\{x\}$, then $(x)^3\in V(P_6)$ and obviously $Q_4^1-\{x\}$ is connected, therefore $Q_4-V(F)$ is connected by the perfect matching, a contradiction. If $|V(P_6)\cap V(Q_4^1)|=2$, all possible cases of $P_6$ are shown in {\bf Fig 6}, and we can see $Q_4-V(F)$ is connected, a contradiction.
\begin{center}
	\begin{figure}[H] 
		\centering
		\def\svgwidth{350px}
		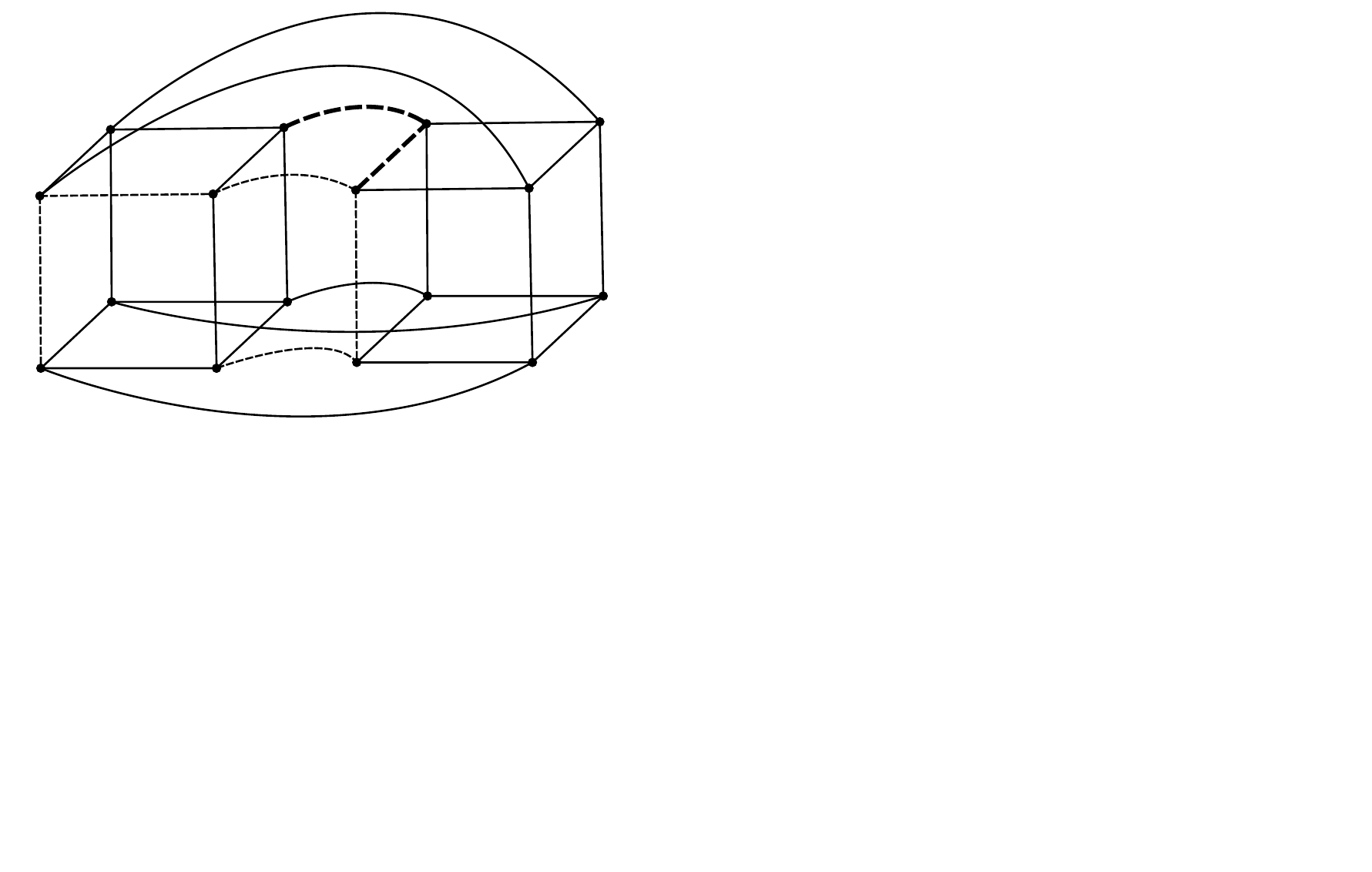
	\end{figure}
	\begin{center}
		\textbf{Fig 6.} Different types of lines represent different ways to construct $P_6$ with startpoint u.
	\end{center}
\end{center}

If $|V(P_6)\cap V(Q_4^1)|=3$, by Theorem \ref{linsresult}, $Q_4^i-V(F)$ is connected for $i=0,1$. Suppose $Q_4-V(F)$ is disconnected, then we have $(y)^3 \in V(F)$ for each $y \in Q_4^0-V(F)$ implies $2^3-3\leq 3$, a contradiction. Therefore $Q_4-V(F)$ is connected, a contradiction.
	
\end{proof}

By Lemma \ref{pathleq} and Lemma \ref{pathgeq}, we have the following theorem.
\begin{thm}\label{paththm}
	For $n\geq 3$ and $3\leq k\leq 2^{n-1}$
	\begin{equation}    \kappa^s(Q_n;P_k)= \kappa(Q_n;P_k)=
	\begin{cases}
	\lceil \frac{2n}{k+1} \rceil   &  \text{if $k$ is odd,} \\
	\lceil \frac{2n}{k} \rceil &  \text{if $k$ is even.}
	\end{cases}                
	\end{equation}
\end{thm}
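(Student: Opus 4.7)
The plan is to sandwich both quantities between the same upper and lower bounds, so that they are forced to coincide. The sandwich is immediate once the two preceding lemmas are combined with the trivial observation that every $P_k$-structure cut is automatically a $P_k$-substructure cut (each $P_k$ is itself a connected subgraph of $P_k$), hence
\[
\kappa^s(Q_n;P_k)\leq \kappa(Q_n;P_k)
\]
holds for every admissible $n$ and $k$.

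First I would invoke Lemma \ref{pathleq} to supply the upper bound
\[
\kappa(Q_n;P_k)\leq \begin{cases} \lceil \tfrac{2n}{k+1}\rceil & k \text{ odd,}\\ \lceil \tfrac{2n}{k}\rceil & k \text{ even,}\end{cases}
\]
which was verified by explicit constructions of $P_k$-structure cuts at vertex $v=00\cdots0$ (splitting the $n$ coordinate directions into consecutive blocks of size $\tfrac{k+1}{2}$ or $\tfrac{k}{2}$, with a possible overlap handled in the last path). Next I would invoke Lemma \ref{pathgeq}, which provides the matching lower bound for the weaker parameter $\kappa^s(Q_n;P_k)$ by an induction/case analysis argument based on the size of the smallest component of $Q_n-V(F)$, using $g$-extra connectivity (Theorem \ref{gextra}) together with the neighborhood bound of Lemma \ref{forleqk-1}.

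Concatenating the two estimates gives, in both parity cases, a chain of the form
\[
\Bigl\lceil \tfrac{2n}{k+1}\Bigr\rceil \text{ (or } \tfrac{2n}{k}\text{)}\;\leq\;\kappa^s(Q_n;P_k)\;\leq\;\kappa(Q_n;P_k)\;\leq\;\Bigl\lceil \tfrac{2n}{k+1}\Bigr\rceil \text{ (or } \tfrac{2n}{k}\text{)},
\]
so equality holds throughout, which is exactly the statement of Theorem \ref{paththm}. There is no real obstacle here: the substantive work has already been invested in proving Lemmas \ref{pathleq} and \ref{pathgeq}, and the theorem is simply the logical consequence of putting them side by side with the structural inequality $\kappa^s\leq\kappa$. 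The only small point worth stating explicitly, to keep the write-up self-contained, is why $\kappa^s(Q_n;P_k)\leq\kappa(Q_n;P_k)$ holds, which follows from the definitions since a $P_k$-structure cut is a particular case of a $P_k$-substructure cut.
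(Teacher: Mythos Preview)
Your proposal is correct and matches the paper's own proof essentially verbatim: the paper simply states that Theorem~\ref{paththm} follows by combining Lemma~\ref{pathleq} and Lemma~\ref{pathgeq}, which is precisely the sandwich argument you describe. The only addition you make is spelling out why $\kappa^s(Q_n;P_k)\leq\kappa(Q_n;P_k)$, which the paper folds into the statement of Lemma~\ref{pathleq} without further comment.
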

\section{$\kappa(Q_n;C_k)$ and $\kappa^s(Q_n;C_k)$ for $n\geq 3$ and $4\leq k \leq 2^{n-1}$ and $k$ is even}

In this section we discuss $H$-structure connectivity and $H$-substructure connectivity of $Q_n$ with $H$ the cycle of length $k$.

\begin{lem}\label{cycleoddks}
	For $n\geq 3$ and odd integer $3\leq k\leq 2^{n-1}$, $\kappa^s(Q_n;C_k)=\lceil \frac{2n}{k+1} \rceil$.
\end{lem}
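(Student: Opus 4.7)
The plan is to reduce this lemma to the path case by exploiting the bipartiteness of $Q_n$. First I would observe that $Q_n$ is bipartite and therefore contains no odd cycle; in particular, no subgraph of $Q_n$ is isomorphic to $C_k$ when $k$ is odd. Consequently every element of any $C_k$-substructure cut of $Q_n$, being isomorphic to some connected subgraph of $C_k$, must actually be isomorphic to a \emph{proper} connected subgraph of $C_k$, i.e.\ to some path $P_j$ with $1\leq j\leq k$. These are precisely the connected subgraphs of $P_k$.

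Next I would establish the reverse direction: every connected subgraph of $P_k$ is also a connected subgraph of $C_k$ (just delete one edge of $C_k$ to obtain $P_k$ and pass to subpaths), so any $P_k$-substructure cut of $Q_n$ automatically qualifies as a $C_k$-substructure cut. Combining the two observations, the families of $C_k$-substructure cuts and of $P_k$-substructure cuts of $Q_n$ coincide when $k$ is odd, giving the identity $\kappa^s(Q_n;C_k)=\kappa^s(Q_n;P_k)$.

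Finally I would invoke Theorem \ref{paththm}, which for $n\geq 3$ and odd $3\leq k\leq 2^{n-1}$ gives $\kappa^s(Q_n;P_k)=\lceil 2n/(k+1)\rceil$; the hypotheses on $n$ and $k$ match those of the lemma exactly, so the desired value is obtained. Since the whole argument is a definitional reduction using only bipartiteness plus an already-proved theorem, there is no genuine combinatorial obstacle; the only thing to double-check carefully is that the cited ranges for $n$ and $k$ in Theorem \ref{paththm} line up with the ranges in the lemma statement, which they do.
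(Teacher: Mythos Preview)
Your proposal is correct and follows essentially the same approach as the paper's own proof: the paper simply notes that since $Q_n$ has no odd cycle, $\kappa^s(Q_n;C_k)=\kappa^s(Q_n;P_k)$, and then applies Theorem~\ref{paththm}. Your argument is just a more explicit unpacking of that one-line reduction.
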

\begin{proof}
	$\kappa^s(Q_n;C_k)=\kappa^s(Q_n;P_k)=\lceil \frac{2n}{k+1} \rceil$ by Theorem \ref{paththm} since there is no odd cycle in $Q_n$.
\end{proof}
\begin{lem}\label{cycleevenks}
	For $n\geq 3$ and even integer $3\leq k\leq 2^{n-1}$, $\kappa^s(Q_n;C_k)\leq\lceil \frac{2n}{k} \rceil$.
\end{lem}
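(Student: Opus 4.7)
The plan is to piggyback directly on Lemma \ref{pathleq}, since for substructure connectivity we are free to use any connected subgraph of $C_k$, and the path $P_k$ (on $k$ vertices) is precisely $C_k$ minus a single edge, hence a connected subgraph of $C_k$. Thus any $P_k$-structure cut of $Q_n$ is automatically a $C_k$-substructure cut, giving
\[
\kappa^s(Q_n;C_k)\;\leq\;\kappa(Q_n;P_k).
\]

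Concretely, I would invoke the even-$k$ branch of Lemma \ref{pathleq}, which produces, for $n\geq 3$ and even $k$ with $3\leq k\leq 2^{n-1}$, an explicit family $F=\{P_k^0,\dots,P_k^{\lceil 2n/k\rceil-1}\}$ of paths on $k$ vertices in $Q_n$ whose removal disconnects $Q_n$ (isolating the vertex $v=00\cdots 0$). Reinterpret each $P_k^i$ as a connected subgraph of $C_k$; the collection $F$, unchanged, now qualifies as a $C_k$-substructure cut of $Q_n$ of size $\lceil 2n/k\rceil$, which immediately yields
\[
\kappa^s(Q_n;C_k)\;\leq\;\bigl|F\bigr|\;=\;\Big\lceil \tfrac{2n}{k}\Big\rceil.
\]

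There is essentially no obstacle here: the only thing one needs to verify is the trivial graph-theoretic fact that $P_k$ embeds as a connected subgraph of $C_k$, after which the bound is inherited verbatim from Lemma \ref{pathleq}. The genuinely interesting content — matching this with a lower bound of the same value — will be the work of the next lemma, not this one.
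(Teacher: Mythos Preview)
Your proposal is correct and essentially matches the paper's own argument: the paper simply writes $\kappa^s(Q_n;C_k)\leq \kappa^s(Q_n;P_k)=\lceil \tfrac{2n}{k}\rceil$ by Theorem~\ref{paththm}, using the same observation that every connected subgraph of $P_k$ (in particular $P_k$ itself) is a connected subgraph of $C_k$. The only cosmetic difference is that you cite Lemma~\ref{pathleq} (the upper bound, which is all that is needed) whereas the paper cites the consolidated Theorem~\ref{paththm}.
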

\begin{proof}
	$\kappa^s(Q_n;C_k)\leq \kappa^s(Q_n;P_k)=\lceil \frac{2n}{k} \rceil$ by Theorem \ref{paththm}.
\end{proof}

\begin{lem}\label{cycleleqk-1}
	For $n\geq 3$ and $3\leq k\leq 2^{n-1}$, if vertices $u$ and $v$ are adjacent in $Q_n-V(C_k)$, then $|N_{Q_n}(\{u,v\})\cap V(C_k)|\leq k-1$.
\end{lem}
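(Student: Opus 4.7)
The plan is to argue by contradiction and lift the ``local'' three-consecutive-vertex bound that was established inside the proof of Lemma \ref{forleqk-1} to a global bound on $C_k$, using the cyclic closure to avoid any induction. Label the vertices of $C_k$ cyclically as $v_1,v_2,\dots,v_k$ (indices mod $k$). Note that if $k$ is odd then $Q_n$ contains no $C_k$ at all (since $Q_n$ is bipartite), so the hypothesis of the lemma is vacuous; I may therefore assume $k$ is even and $k\geq 4$.

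First I would re-establish the local claim: for every index $i$,
\[
\bigl|N_{Q_n}(\{u,v\})\cap\{v_{i-1},v_i,v_{i+1}\}\bigr|\leq 2.
\]
The argument is identical to the one inside Lemma \ref{forleqk-1}. Suppose this intersection had size $3$. Because $Q_n$ is triangle-free, neither $u$ nor $v$ can be adjacent to two of the three cyclically consecutive vertices that are themselves adjacent (which would create a triangle on the edges $v_{i-1}v_i$ or $v_iv_{i+1}$). The only feasible distribution up to symmetry is therefore $u$ adjacent to $v_{i-1}$ and $v_{i+1}$ and $v$ adjacent to $v_i$. But then $u$ and $v_i$ are at distance $2$ in $Q_n$ (via $u\sim v\sim v_i$) and share the three distinct common neighbors $v$, $v_{i-1}$, $v_{i+1}$, contradicting Lemma \ref{dist2nodes}.

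Now the global bound is immediate from the cyclic structure: if we had $|N_{Q_n}(\{u,v\})\cap V(C_k)|=k$, then every vertex of $C_k$ would lie in $N_{Q_n}(\{u,v\})$, so in particular every consecutive triple $\{v_{i-1},v_i,v_{i+1}\}$ would be entirely contained in $N_{Q_n}(\{u,v\})$, directly contradicting the local claim. Hence $|N_{Q_n}(\{u,v\})\cap V(C_k)|\leq k-1$.

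The only step that requires any real work is the local claim for three consecutive vertices, and that case analysis has essentially already been carried out in the proof of Lemma \ref{forleqk-1}; the cycle version is in fact slightly cleaner than the path version because the wrap-around edge $v_kv_1$ lets us deduce the full-cycle bound from the local bound in one shot, without the inductive counting on $k=3q_k+r_k$ used for paths.
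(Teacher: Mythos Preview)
Your proof is correct and follows essentially the same approach as the paper: assume for contradiction that every vertex of $C_k$ lies in $N_{Q_n}(\{u,v\})$, pick three consecutive cycle vertices, use triangle-freeness to force the distribution $u\sim v_{i-1},v_{i+1}$ and $v\sim v_i$, and then invoke Lemma~\ref{dist2nodes} to obtain three common neighbors of $u$ and $v_i$. Your write-up is a bit more explicit (noting the vacuous odd-$k$ case and justifying the ``without loss of generality'' step via triangle-freeness), but the argument is the same.
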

\begin{proof}
	We prove this lemma by contradiction. Suppose that $|N_{Q_n}(\{u,v\})\cap V(C_k)|=k$ i.e. each vertex in $V(C_k)$ is adjacent to either $u$ or $v$. Take vertices $v_i\in V(C_k)$ for $i=1,2,3$, such that $v_1v_2\in E(Q_n)$ and $v_2,v_3\in E(Q_n)$. Without loss of generality, we assume that $v_1,v_3\in N_{Q_n}(u)$ and $v_2\in N_{Q_n}(v)$, then $d(u,v_2)=2$ but $N_{Q_n}(u)\cap N_{Q_n}(v_2)=\{v,v_1,v_3\}$, a contradiction.
\end{proof}
\begin{lem}\label{cyclesubstr}
	For $n\geq 3$ and even integer $4\leq k \leq 2^{n-1}$, $\kappa^s(Q_n;C_k)\geq \lceil\frac{2n}{k} \rceil$.
\end{lem}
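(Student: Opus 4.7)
The plan is to mirror the three-case contradiction argument of Lemma~\ref{pathgeq}, adapted to allow the new possibility that an element of $F$ is $C_k$ itself rather than a proper subpath. Suppose for contradiction that $F$ is a $C_k$-substructure cut of $Q_n$ with $|F|\leq \lceil 2n/k \rceil-1$. Since the connected subgraphs of $C_k$ are exactly the paths $P_j$ for $1\leq j\leq k$ together with $C_k$ itself, each $H\in F$ has $|V(H)|\leq k$. If every element of $F$ is a path then $F$ is a $P_k$-substructure cut, immediately contradicting Lemma~\ref{pathgeq}; so we may assume at least one element of $F$ is a copy of $C_k$, and we analyse the smallest component $C$ of $Q_n-V(F)$.

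First I would dispose of $|V(C)|=1$ and $|V(C)|=2$ in parallel with Subcases~2.1 and~2.2 of Lemma~\ref{pathgeq}. For $|V(C)|=\{u\}$, the triangle-freeness of $Q_n$ forces $N_{Q_n}(u)\cap V(H)$ to be an independent set in $H$; for both $H=P_j$ ($j\leq k$) and $H=C_k$ with $k$ even, this yields $|N_{Q_n}(u)\cap V(H)|\leq k/2$, so that $n\leq \tfrac{k}{2}|F|$, contradicting $|F|\leq \lceil 2n/k\rceil-1$. For $V(C)=\{u,v\}$ with $uv\in E(Q_n)$, the bound $|N_{Q_n}(\{u,v\})\cap V(H)|\leq k-1$ holds for paths by Lemma~\ref{forleqk-1} and for cycles by Lemma~\ref{cycleleqk-1}, so $|F|\geq\lceil(2n-2)/(k-1)\rceil$; a short arithmetic check shows this exceeds $\lceil 2n/k\rceil-1$ whenever $k\leq 2n-2$, while for $k\geq 2n$ the inequality $|F|\geq 1$ is trivial.

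For $|V(C)|\geq 3$, the vertex-count bound $|V(F)|\leq k(\lceil 2n/k\rceil-1)\leq 2n-1$ contradicts $\kappa_2(Q_n)\geq 3n-5$ from Theorem~\ref{gextra} whenever $n\geq 5$. The remaining small cases are $n=3$ (only $k=4$, trivial from $|V(F)|\leq 4$) and $n=4$ with $k\in\{4,6,8\}$; the only subcase not handled by a strict vertex-count inequality is $(n,k)=(4,6)$, where $|V(F)|$ may equal $\kappa_2(Q_4)=6$, and the new ingredient relative to Lemma~\ref{pathgeq} is that the single element of $F$ might be a $C_6$.

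I expect this $(n,k)=(4,6)$ boundary case to be the main obstacle, and I would resolve it by exploiting the rigid structure of $C_6$'s in $Q_n$: every $6$-cycle uses exactly three distinct bit-directions, each exactly twice. By symmetry of $Q_4$ I would split into two subcases according to whether the three directions all avoid the partition bit (so $C_6\subset Q_4^i$ for some $i$, leaving the opposite hyperplane $Q_4^{1-i}$ intact and connecting to $Q_4^i-V(C_6)$ through the perfect matching between the halves) or include the partition bit (so exactly three vertices of $C_6$ lie in each hyperplane, forming a $P_3$ inside each half; each half then remains connected by Theorem~\ref{paththm} applied to $P_3$ in $Q_3$, and enough of the eight matching edges between the halves avoid $V(C_6)$ to connect the two pieces). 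This verifies that no single $C_6$ is a cut of $Q_4$ and completes the contradiction.
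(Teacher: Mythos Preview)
Your overall structure matches the paper's proof closely: reduce to Lemma~\ref{pathgeq} when no $C_k$ appears, then analyse the smallest component of $Q_n-V(F)$ via the same three size cases and the same neighbour-counting bounds (Lemma~\ref{forleqk-1} for paths, Lemma~\ref{cycleleqk-1} for cycles). The only substantive divergence is the boundary case $(n,k)=(4,6)$. The paper disposes of it in one line by invoking the cited Lemma~\ref{TAMI}, which gives $\kappa(Q_4;C_6)\geq \lceil 4/3\rceil=2$, contradicting $|F|\leq 1$; you instead attempt a direct structural verification.

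Your direct verification has a small gap. It is true that every $6$-cycle in $Q_n$ uses exactly three bit-directions, each twice; but it does \emph{not} follow that a $C_6$ using the partition direction has three vertices in each half. The two partition-direction edges split the cycle into two arcs whose edge-lengths sum to $4$, so the vertex split between $Q_4^0$ and $Q_4^1$ can be either $3$--$3$ (arcs of lengths $2,2$) or $4$--$2$ (arcs of lengths $3,1$); the direction sequence $3,a,3,b,a,b$ realises the latter. Your argument therefore needs an additional subcase with a $P_4$ in one half and a $P_2$ in the other. This is easy to patch in the same style---$Q_3$ minus a $P_4$ is connected by Theorem~\ref{paththm}, $Q_3$ minus two adjacent vertices is connected by $3$-connectivity, and enough matching edges between the halves survive---but it should be stated explicitly.
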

\begin{proof}
	Let $F=\{P_1^{1},P_1^{2},\dots,P_1^{a_1},P_2^{1},\dots,P_2^{a_2},\dots,P_k^{1},\dots,P_k^{a_k},C_k^{1},\dots,C_k^{b}\}$ be a set of paths of length less than $k-1$ and cycles of length $k$. We prove that $Q_n-V(F)$ is connected for $|F|\leq \lceil\frac{2n}{k} \rceil-1$ .
	\vskip .2cm
	{\bf Case 1.} $b=0$. In this case, we can think of $F$ a $P_k$-substructure cut, $Q_n-V(F)$ is connected by Theorem \ref{paththm}.
	
	\vskip .2cm
	{\bf Case 2.} $b>0$.
	
	\vskip .2cm
	{\bf Subcase 2.1.} $n=3$. $k$ must be $4$ and $|F|\leq \lceil\frac{6}{4} \rceil-1=1$. Let $F=\{C_4\}$ and $V(C_4)=\{v_0=000,v_1=100,v_2=110,v_3=010\}$ by the symmetry of $Q_3$, it is clear that $Q_3-V(F)$ is connected.
	
	\vskip .2cm
	{\bf Subcase 2.2.} $n\geq 4$. We prove that $Q_n-V(F)$ is connected by contradiction. Suppose $Q_n-V(F)$ is disconnected, let $C$ be one of the smallest components of it.
	
	\vskip .2cm
	{\bf Subcase 2.2.1.} $|V(C)|=1$.
	Let $V(C)=\{u\}$, each element in $F$ contains at most $\frac{k}{2}$ neighbors of $u$. Therefore $\frac{k}{2}|F|\geq n$ i.e. $\frac{k}{2}(\lceil \frac{2n}{k} \rceil-1)\geq n$, a contradiction.
	\vskip .2cm
	{\bf Subcase 2.2.2.} $|V(C)|=2$. 
	Let $V(C)=\{u,v\}$, it is obvious that $|N_{Q_n}(\{u,v\})|=2n-2$. For each element $G \in F$, we have $|N_{Q_n}(\{u,v\})\cap V(G)|\leq k-1$ by Lemma \ref{forleqk-1} and Lemma \ref{cycleleqk-1}. Therefore $F$ must has at least $\lceil \frac{2n-2}{k-1} \rceil$ elements, but $|F|\leq \lceil \frac{2n}{k} \rceil -1\leq \frac{2n+k-2}{k}-1=\frac{2n-2}{k}<\lceil \frac{2n-2}{k-1} \rceil$, a contradiction.
	
	\vskip .2cm
	{\bf Subcase 2.2.3.} $|V(C)|\geq 3$.
	
	\vskip .2cm
	{\bf Subcase 2.2.3.1.} For $n\geq 5$ and $n=4, k=4,8$ the discussion is the same as Lemma \ref{pathgeq}.
	
	\vskip .2cm
	{\bf Subcase 2.2.3.2.} For $n=4, k=6$, $|F|\leq \lceil\frac{8}{6} \rceil-1=1$. Let $F=\{C_6\}$, $F$ is a $C_6$-structure cut and $2=\lceil \frac{4}{3} \rceil\leq\kappa(Q_4,C_6)\leq 1$ by Lemma \ref{TAMI}, a contradiction.
\end{proof}
By Lemma \ref{cycleevenks} and Lemma \ref{cyclesubstr}, we have the following theorem.
\begin{thm}\label{cycleksandkgeq}
	For $n\geq 3$ and even integer $3\leq k\leq 2^{n-1}$, $\kappa(Q_n;C_k)\geq \kappa^s(Q_n;C_k)=\lceil \frac{2n}{k}\rceil$.
\end{thm}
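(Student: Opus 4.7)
The plan is to combine the two lemmas just proved with the elementary observation that any $C_k$-structure cut is also a $C_k$-substructure cut. The upper bound on $\kappa^s(Q_n;C_k)$ is given by Lemma \ref{cycleevenks}, which supplies an explicit substructure cut of cardinality $\lceil 2n/k\rceil$ (obtained from the corresponding $P_k$-construction of Lemma \ref{pathleq}). The matching lower bound on $\kappa^s(Q_n;C_k)$ is exactly the statement of Lemma \ref{cyclesubstr}. Putting these together gives $\kappa^s(Q_n;C_k)=\lceil 2n/k\rceil$.

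To derive $\kappa(Q_n;C_k)\geq \kappa^s(Q_n;C_k)$, I would argue directly from the definitions. If $F=\{H_1,\dots,H_m\}$ is any $C_k$-structure cut, then each $H_i$ is isomorphic to $C_k$, which is trivially a connected subgraph of $C_k$ itself. Hence $F$ qualifies as a $C_k$-substructure cut of $Q_n$, so every $C_k$-structure cut is counted among the $C_k$-substructure cuts; taking the minimum cardinality on each side yields $\kappa(Q_n;C_k)\geq \kappa^s(Q_n;C_k)$. Chaining these gives $\kappa(Q_n;C_k)\geq \kappa^s(Q_n;C_k)=\lceil 2n/k\rceil$, which is exactly the stated inequality.

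There is essentially no obstacle here: the theorem is a packaging step for the substantive work already done in Lemma \ref{cycleevenks} and Lemma \ref{cyclesubstr}. The only micro-point worth double-checking is the quantifier range, namely that Lemma \ref{cyclesubstr} is stated for even $k$ with $4\leq k\leq 2^{n-1}$, so the theorem's range (even $k$ with $3\leq k\leq 2^{n-1}$) collapses to the same interval; no boundary case is lost. The result also subsumes Theorem \ref{Manesresult}: in particular $\kappa(Q_n;C_{2^m})\geq \lceil 2n/2^m\rceil=n-m+\lceil (2-m)/2^{m-1}\rceil$ combined with Mane's upper bound pins the value down, thereby resolving the open problem posed in \cite{ManeStructure}.
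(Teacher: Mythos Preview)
Your argument is correct and is exactly the paper's approach: the theorem is stated immediately after Lemma~\ref{cycleevenks} and Lemma~\ref{cyclesubstr} and is derived by combining them, with the inequality $\kappa(Q_n;C_k)\geq \kappa^s(Q_n;C_k)$ being immediate from the definitions as you note.

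One caveat about your closing remark: this theorem alone does \emph{not} resolve Mane's open problem. Your identity $\lceil 2n/2^m\rceil = n-m+\lceil (2-m)/2^{m-1}\rceil$ fails already at $n=6$, $m=3$ (left side $2$, right side $3$), and indeed the paper shows in Lemma~\ref{budengs} that $\lceil n/2^{m-1}\rceil < n-m$ for $n\geq 6$ and $3\leq m\leq n-2$. So Mane's upper bound $n-m$ does not meet the lower bound supplied here; the question is settled only after the sharper upper bound of Lemma~\ref{cycleleq} is established, giving $\kappa(Q_n;C_{2^m})=\lceil n/2^{m-1}\rceil$ rather than $n-m$ in that range.
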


\begin{lem}\label{n=3}
	For $n=3$, it is obvious that $\kappa(Q_3;C_4)=2$.
\end{lem}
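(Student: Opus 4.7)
The plan is to establish both inequalities separately; this is why the author calls it ``obvious''.

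For the lower bound, I would simply invoke Theorem \ref{cycleksandkgeq}, which has just been proved and which applies at $n=3$, $k=4$ since $4 \leq 2^{3-1} = 4$. It yields
\[
\kappa(Q_3;C_4) \;\geq\; \kappa^s(Q_3;C_4) \;=\; \left\lceil \tfrac{2\cdot 3}{4} \right\rceil \;=\; 2.
\]

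For the upper bound, I would exhibit an explicit $C_4$-structure cut of cardinality $2$. A natural choice is the pair of ``opposite face'' 4-cycles in $Q_3$, e.g.\
\[
C_4^1 = (000,100,110,010,000), \qquad C_4^2 = (001,101,111,011,001).
\]
Both are 4-cycles in $Q_3$, they are vertex-disjoint, and their union covers all of $V(Q_3)$. Hence for $F=\{C_4^1,C_4^2\}$ the graph $Q_3-V(F)$ is the empty (trivial) graph, so $F$ is a $C_4$-structure cut and therefore $\kappa(Q_3;C_4)\leq 2$.

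There is no real obstacle here: the only thing worth checking is that the two listed sequences really are 4-cycles of $Q_3$ (each consecutive pair of vertices differs in exactly one coordinate), which is immediate, and that they are vertex-disjoint, which is visible from the last coordinate being $0$ throughout $C_4^1$ and $1$ throughout $C_4^2$. Combining the two bounds gives $\kappa(Q_3;C_4)=2$.
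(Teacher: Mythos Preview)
Your proof is correct and supplies precisely the ``obvious'' argument that the paper omits (the paper gives no proof for this lemma). The lower bound via Theorem~\ref{cycleksandkgeq} is exactly what the surrounding text makes available, and the upper bound via the two opposite faces $C_4^1,C_4^2$ is the natural construction; note that this upper bound genuinely requires the ``trivial'' clause in the definition of a structure cut, since any two distinct $4$-cycles in $Q_3$ are either disjoint (covering all $8$ vertices) or share exactly an edge (leaving two adjacent vertices), so one cannot disconnect $Q_3$ with two $C_4$'s without emptying it.
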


\begin{lem}\label{cubecycle}(See \cite{SaadTopological})
	A cycle of length $l$ can be mapped into $Q_n$ when $l$ is even and $4\leq l \leq 2^n$.
\end{lem}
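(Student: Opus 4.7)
The plan is to prove this by induction on $n$, which is essentially the textbook proof that the hypercube is \emph{bipancyclic}. The base case $n=2$ is immediate, since $Q_2$ is itself a $C_4$ and $4$ is the only admissible even value of $l$. For the inductive step, assume $Q_{n-1}$ contains a cycle of every even length in $[4,2^{n-1}]$ and split $Q_n$ into its two $(n-1)$-subcubes $Q_n^0$ and $Q_n^1$ joined by the perfect matching $M=\{v\,(v)^{n-1}:v\in V(Q_n^0)\}$ along coordinate $n-1$.

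For $4\leq l\leq 2^{n-1}$ the induction hypothesis applied to $Q_n^0\cong Q_{n-1}$ yields $C_l\subset Q_n^0\subset Q_n$ directly, so only the range $2^{n-1}<l\leq 2^n$ requires real work. There the cycle is forced to use both subcubes, and I would build it by splicing two paths. Decompose $l=l_0+l_1$ with $l_0,l_1$ both even, at least $2$, and at most $2^{n-1}$ (e.g.\ $l_0=2^{n-1}$, $l_1=l-2^{n-1}$). Pick an edge $uv\in E(Q_n^0)$, find a $u$--$v$ path $P_0$ on exactly $l_0$ vertices in $Q_n^0$, and find a $(u)^{n-1}$--$(v)^{n-1}$ path $P_1$ on exactly $l_1$ vertices in $Q_n^1$. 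Traversing $P_0$ from $u$ to $v$, crossing the matching edge to $(v)^{n-1}$, traversing $P_1$ back to $(u)^{n-1}$, and crossing the matching edge back to $u$ yields a cycle of length $l_0+l_1=l$ in $Q_n$. Parities are consistent: since $u,v$ are adjacent they lie in opposite bipartition classes of $Q_n^0$, so every $u$--$v$ path has an even number of vertices, and likewise for $P_1$ in $Q_n^1$.

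The main obstacle is supplying the two paths $P_0$ and $P_1$ with prescribed endpoints \emph{and} prescribed even vertex-counts simultaneously. This amounts to a path-bipancyclicity statement for $Q_{n-1}$: between any two adjacent vertices there is a path on every even number of vertices from $2$ up to $2^{n-1}$. I would invoke this from the classical literature on Hamiltonian laceability of hypercubes (Havel-type theorems and their strengthenings), after which the construction above closes the proof. If one insisted on a fully self-contained argument, this laceability-with-prescribed-length property would itself be proved by a slightly more delicate induction on $n$ in which one carries as the inductive hypothesis both the cycle statement and the path-length statement together.
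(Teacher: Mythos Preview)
The paper does not supply its own proof of this lemma: it is stated with a citation to Saad--Schultz and used as a black box. So there is nothing to compare against, and your task reduces to whether your argument stands on its own.

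Your inductive scheme is the standard bipancyclicity proof and is correct. One remark that would make it fully self-contained and avoid the appeal to Havel-type laceability results: the hypercube $Q_{n-1}$ is edge-transitive, so once the induction hypothesis gives you \emph{some} cycle of length $l_0$ in $Q_{n-1}$ (for each even $l_0$ with $4\le l_0\le 2^{n-1}$), it gives you one through your chosen edge $uv$; deleting that edge yields the required $u$--$v$ path on $l_0$ vertices. The case $l_0=2$ is just the edge itself. The same observation handles $P_1$. With this, you do not need to import any external path-length or laceability statement, and the induction closes cleanly with only the cycle hypothesis.
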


\begin{lem}\label{oddpathcycle}
	For $n\geq 2$ and odd integer $1\leq q \leq 2^n-1$, each pair of adjacent vertices $u$ and $v$ in $Q_n$ have a path of length $q$ between them.
\end{lem}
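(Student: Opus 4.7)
The plan is to prove the statement by induction on $n$. The base case $n=2$ is immediate: $Q_2$ is a $4$-cycle, and between any two adjacent vertices $u,v$ we have the edge itself (length $1$) and the complementary arc of the cycle (length $3$), covering both odd values of $q\in[1,2^2-1]$.

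For the inductive step, suppose the lemma holds for $Q_{n-1}$ and let $u,v$ be adjacent in $Q_n$, differing in some bit $i$. By the symmetry of the hypercube we may assume $i\neq n-1$, so that $u,v\in V(Q_n^0)$ and $u^{n-1},v^{n-1}\in V(Q_n^1)$ are also adjacent (and both sub-cubes are isomorphic to $Q_{n-1}$). For odd $q$ with $1\le q\le 2^{n-1}-1$, apply the induction hypothesis inside $Q_n^0$ to obtain a $u$--$v$ path of length $q$ lying entirely in $Q_n^0$. For odd $q$ with $3\le q\le 2^{n-1}+1$, take the $u^{n-1}$--$v^{n-1}$ path $P$ of length $q-2$ in $Q_n^1$ (available by induction since $q-2$ is odd and at most $2^{n-1}-1$) and concatenate to form $u\to u^{n-1}\to P\to v^{n-1}\to v$ of length $q$.

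To realise the remaining odd lengths $q\in[2^{n-1}+3,\,2^n-1]$, the construction must traverse vertices of both sub-cubes non-trivially. A natural scheme is to walk from $u$ along a path in $Q_n^0$ to an intermediate vertex $u_1\neq v$, cross via $u_1\to u_1^{n-1}$, traverse a path in $Q_n^1$ from $u_1^{n-1}$ to $v^{n-1}$, and close with $v^{n-1}\to v$. By varying the choice of $u_1$ and the lengths of the two sub-cube segments, and exploiting Hamiltonicity of $Q_{n-1}$ (Theorem \ref{hamilton}) for the near-Hamiltonian extreme $q=2^n-1$ (where Hamiltonian paths of the two sub-cubes are spliced through a single crossing edge), every admissible total length can be produced.

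The main obstacle is that the induction hypothesis only supplies paths between \emph{adjacent} vertices of each sub-cube, whereas in the long-path construction the intermediate endpoints $u_1$ and $u_1^{n-1}$ are generally at distance greater than $1$ from $v$ and $v^{n-1}$. To bridge this gap one can either strengthen the inductive assertion to a bipanconnectivity statement covering all pairs of vertices in opposite bipartition classes of $Q_{n-1}$, or decompose each long sub-cube segment into shorter pieces whose consecutive endpoints are adjacent so that the stated hypothesis applies, concatenating them while carefully tracking parity so that the accumulated length matches the prescribed odd value of $q$.
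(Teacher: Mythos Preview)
Your argument has a genuine gap in the range $q\in[2^{n-1}+3,\,2^n-1]$. You correctly identify the obstacle yourself: the induction hypothesis only supplies paths between \emph{adjacent} vertices of $Q_{n-1}$, whereas your long-path scheme requires a path in $Q_n^1$ from $u_1^{n-1}$ to $v^{n-1}$, and these are in general not adjacent. The two remedies you propose---strengthening the induction to full bipanconnectivity, or decomposing each sub-cube segment into adjacent-endpoint pieces---are not carried out. The first amounts to proving a strictly stronger theorem; the second is not obviously workable, since concatenating several adjacent-endpoint paths while keeping all vertices distinct and hitting a prescribed total length is precisely the difficulty one is trying to avoid. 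As written, the proposal does not establish the lemma for these values of $q$.

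By contrast, the paper's proof is two lines and avoids induction altogether. It invokes Lemma~\ref{cubecycle} (bipancyclicity: $Q_n$ contains a cycle of every even length $l$ with $4\le l\le 2^n$) together with the edge-transitivity of $Q_n$: for odd $q\ge 3$ there is a cycle of length $q+1$ through the edge $uv$, and deleting $uv$ from that cycle yields the desired $u$--$v$ path of length $q$. This reduces the lemma to a direct corollary of a known structural result rather than rebuilding that result from scratch.
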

\begin{proof}
	The lemma is true when $q=1$ since $u,v$ are adjacent. When $q\geq 3$, there is a cycle $C_{q+1}$ of length $q+1$ with $uv\in E(C_{q+1})$, $C_{q+1}-\{uv\}$ is the path  required.
\end{proof}

\begin{lem}\label{cycleleq}
	For $n\geq 4$ and $k$ even,
	\begin{equation}
	\kappa(Q_n;C_k)
	\begin{cases}
	=n-2 & \text{k=4,} \\
	\leq \lceil \frac{2n}{k} \rceil & \text{$n\geq 5$ and $6\leq k\leq 2^{n-2}$.}
	\end{cases}
	\end{equation}
\end{lem}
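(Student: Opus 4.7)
The case $k=4$ is exactly the content of Theorem~\ref{linsresult}, so it requires no separate argument. For the remaining range $6\le k\le 2^{n-2}$ the plan is to exhibit, for each admissible $k$, an explicit $C_k$-structure cut of size $\lceil 2n/k\rceil$ that isolates $v=00\cdots0$. The key building block is, for any size-$k/2$ subset $G=\{j_1<j_2<\cdots<j_{k/2}\}$ of $\{0,1,\ldots,n-1\}$, the cycle
\[
C(G)=\bigl((v)^{j_1},\,((v)^{j_1})^{j_2},\,(v)^{j_2},\,((v)^{j_2})^{j_3},\,\ldots,\,(v)^{j_{k/2}},\,((v)^{j_{k/2}})^{j_1},\,(v)^{j_1}\bigr).
\]
A quick check shows that for $k\ge 6$ (equivalently $k/2\ge 3$) this is a genuine $C_k$ in $Q_n$: consecutive listed vertices differ in exactly one bit, the $k/2$ weight-$1$ vertices $(v)^{j_l}$ are obviously distinct, and the $k/2$ weight-$2$ vertices are indexed by the cyclically consecutive pairs $\{j_l,j_{l+1}\}$, which are pairwise distinct because the $j_l$ are distinct. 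Moreover $v\notin V(C(G))$ while $(v)^j\in V(C(G))$ for every $j\in G$.

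For $6\le k\le 2n$ I would cover $\{0,1,\ldots,n-1\}$ by $\lceil 2n/k\rceil$ groups $G_0,\ldots,G_{\lceil 2n/k\rceil-1}$ of size $k/2$, chosen exactly as in Subcases~2.2.1 and~2.2.2 of Lemma~\ref{pathleq} (consecutive blocks $\{ik/2,\ldots,(i+1)k/2-1\}$, with a possibly-overlapping final block $\{n-k/2,\ldots,n-1\}$ when $k/2\nmid n$). Since $\bigcup_i G_i=\{0,1,\ldots,n-1\}$, every neighbor of $v$ lies in $\bigcup_i V(C(G_i))$, so $F=\{C(G_i)\}$ is a $C_k$-structure cut of size $\lceil 2n/k\rceil$ that isolates $v$.

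For $2n<k\le 2^{n-2}$ (which requires $n\ge 6$) we have $\lceil 2n/k\rceil=1$ and must produce a single $C_k$ covering $N(v)$ and avoiding $v$. I would start with the $2n$-cycle $C_0=C(\{0,1,\ldots,n-1\})$ and splice a longer detour into its wrap-around edge $xy$ with $x=((v)^{n-1})^0$ and $y=(v)^0$. Both $x$ and $y$ satisfy bit $0$ equal to $1$ and bit $1$ equal to $0$, so they lie in the sub-hypercube $H=\{u\in V(Q_n):u_0=1,\,u_1=0\}\cong Q_{n-2}$, inside which they differ only in bit $n-1$ and are therefore adjacent. A direct inspection of the weight-$1$ and weight-$2$ vertices of $C_0$ shows that $V(C_0)\cap H=\{x,y\}$, while $v\notin H$. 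Applying Lemma~\ref{oddpathcycle} inside $H$ with the odd parameter $q=k-2n+1$ (odd because $k$ and $2n$ are even, and in the range $[1,2^{n-2}-1]$ because $k\le 2^{n-2}$) yields a path $P\subseteq H$ of length $q$ from $x$ to $y$; replacing the edge $xy$ of $C_0$ by $P$ produces a cycle of length $(2n-1)+(k-2n+1)=k$ with all vertices distinct, containing $N(v)$, and missing $v$.

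The main delicate point is the last case: verifying the ``geometric'' disjointness $V(C_0)\cap H=\{x,y\}$ and $v\notin H$, which is what justifies the specific choice of the wrap-around edge $xy$ and the two-bit description of $H$. Once this is settled, Lemma~\ref{oddpathcycle} supplies the required detour in one stroke and the splice automatically yields a single $C_k$ isolating $v$, completing the upper bound $\kappa(Q_n;C_k)\le\lceil 2n/k\rceil$ in both ranges.
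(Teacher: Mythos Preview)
Your proposal is correct and follows essentially the same approach as the paper: the same zig-zag cycles $C(G)$ through blocks of neighbors of $v$ for $k\le 2n$, and the same splice-a-detour-through-a-$Q_{n-2}$ idea (via Lemma~\ref{oddpathcycle} with $q=k-2n+1$) for $k>2n$. The only differences are cosmetic---you fix bits $0,1$ to define $H$ and splice at the edge $((v)^{n-1})^0\,(v)^0$, whereas the paper fixes bits $n-2,n-1$ and splices at the adjacent edge $(u)^{n-1}\,((u)^{n-1})^0$---and you are in fact more explicit than the paper in checking $V(C_0)\cap H=\{x,y\}$ and $v\notin H$, which the paper leaves implicit.
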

\begin{proof}
	For $k=4$, we have $\kappa(Q_n;C_4)=n-2$ by Theorem \ref{linsresult}.
	
    For $n\geq 5$ and $6\leq k\leq 2^{n-2}$,
	we set $u=00\dots0$ being a vertex in $Q_n$.
	\vskip .2cm
	{\bf Case 1.} $\frac{k}{2} \leq n$.
	
	\vskip .2cm
	{\bf Subcase 1.1.} $n$ is divided by $\frac{k}{2}$. Let $F = \{ C^i_k | i = 0,
	2, \ldots, \lceil \frac{2n}{k} \rceil - 1 = \frac{2n}{k} - 1 \}$ with $C^i_k
	= \left( (u)^{i \frac{k}{2}}, \left( (u)^{i \frac{k}{2}} \right)^{i
		\frac{k}{2} + 1}, (u)^{i \frac{k}{2} + 1}, \ldots, (u)^{(i + 1)
		\frac{k}{2} - 1}, \left( (u)^{(i + 1) \frac{k}{2} - 1} \right)^{i
		\frac{k}{2}}, (u)^{i \frac{k}{2}} \right)$. Then $F$ is a $C_k$-structure cut of $Q_n$.
	\begin{center}
		\begin{figure}[H] 
			\centering
			\def\svgwidth{250px}
\begingroup%
  \makeatletter%
  \providecommand\color[2][]{%
    \errmessage{(Inkscape) Color is used for the text in Inkscape, but the package 'color.sty' is not loaded}%
    \renewcommand\color[2][]{}%
  }%
  \providecommand\transparent[1]{%
    \errmessage{(Inkscape) Transparency is used (non-zero) for the text in Inkscape, but the package 'transparent.sty' is not loaded}%
    \renewcommand\transparent[1]{}%
  }%
  \providecommand\rotatebox[2]{#2}%
  \newcommand*\fsize{\dimexpr\f@size pt\relax}%
  \newcommand*\lineheight[1]{\fontsize{\fsize}{#1\fsize}\selectfont}%
  \ifx\svgwidth\undefined%
    \setlength{\unitlength}{322.91689028bp}%
    \ifx\svgscale\undefined%
      \relax%
    \else%
      \setlength{\unitlength}{\unitlength * \real{\svgscale}}%
    \fi%
  \else%
    \setlength{\unitlength}{\svgwidth}%
  \fi%
  \global\let\svgwidth\undefined%
  \global\let\svgscale\undefined%
  \makeatother%
  \begin{picture}(1,0.59352892)%
    \lineheight{1}%
    \setlength\tabcolsep{0pt}%
    \put(0,0){\includegraphics[width=\unitlength,page=1]{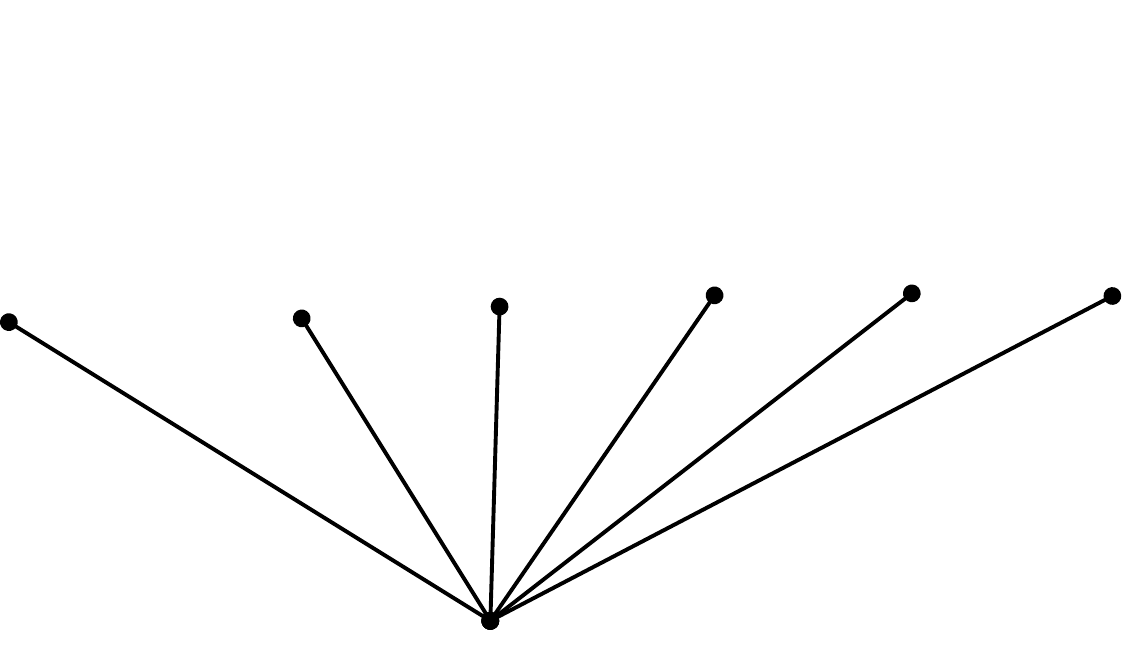}}%
    \put(0.42518154,0.0049128){\color[rgb]{0,0,0}\makebox(0,0)[lt]{\lineheight{1.25}\smash{\begin{tabular}[t]{l}$u$\end{tabular}}}}%
    \put(0,0){\includegraphics[width=\unitlength,page=2]{ill1.pdf}}%
  \end{picture}%
\endgroup%

		\end{figure}
		\begin{center}
			\textbf{Fig 7.} An example for $n=6$ and $k=6$.
		\end{center}
	\end{center}
	
	\vskip .2cm
	{\bf Subcase 1.2.} $n$ is not divided by $\frac{k}{2}$. Let $F = \{ C^i_k | i =
	0, 2, \ldots, \lceil \frac{2n}{k} \rceil - 1 \}$ with $C^i_k = \left(
	(u)^{i \frac{k}{2}}, \left( (u)^{i \frac{k}{2}} \right)^{i \frac{k}{2}
		+ 1}, (u)^{i \frac{k}{2} + 1}, \ldots, (u)^{(i + 1) \frac{k}{2} - 1},
	\left( (u)^{(i + 1) \frac{k}{2} - 1} \right)^{i \frac{k}{2}}, (u)^{i
		\frac{k}{2}} \right)$ for $i = 0, \ldots, \lceil \frac{2n}{k} \rceil - 2$ and
	$$C^{\lceil \frac{2n}{k} \rceil - 1}_k = \left( (u)^{n - \frac{k}{2}},
	\left( (u)^{n - \frac{k}{2}} \right)^{n - \frac{k}{2} + 1}, (u)^{n -
		\frac{k}{2} + 1}, \ldots, (u)^{n - 1}, \left(\left(u\right)^{n - 1}\right)^{n -
		\frac{k}{2}}, (u)^{n - \frac{k}{2}} \right) .$$ Then $F$ is a $C_k$-structure cut of $Q_n$.
	
\begin{center}
	\begin{figure}[H] 
		\centering
		\def\svgwidth{250px}
\begingroup%
  \makeatletter%
  \providecommand\color[2][]{%
    \errmessage{(Inkscape) Color is used for the text in Inkscape, but the package 'color.sty' is not loaded}%
    \renewcommand\color[2][]{}%
  }%
  \providecommand\transparent[1]{%
    \errmessage{(Inkscape) Transparency is used (non-zero) for the text in Inkscape, but the package 'transparent.sty' is not loaded}%
    \renewcommand\transparent[1]{}%
  }%
  \providecommand\rotatebox[2]{#2}%
  \newcommand*\fsize{\dimexpr\f@size pt\relax}%
  \newcommand*\lineheight[1]{\fontsize{\fsize}{#1\fsize}\selectfont}%
  \ifx\svgwidth\undefined%
    \setlength{\unitlength}{326.12661306bp}%
    \ifx\svgscale\undefined%
      \relax%
    \else%
      \setlength{\unitlength}{\unitlength * \real{\svgscale}}%
    \fi%
  \else%
    \setlength{\unitlength}{\svgwidth}%
  \fi%
  \global\let\svgwidth\undefined%
  \global\let\svgscale\undefined%
  \makeatother%
  \begin{picture}(1,0.58816589)%
    \lineheight{1}%
    \setlength\tabcolsep{0pt}%
    \put(0,0){\includegraphics[width=\unitlength,page=1]{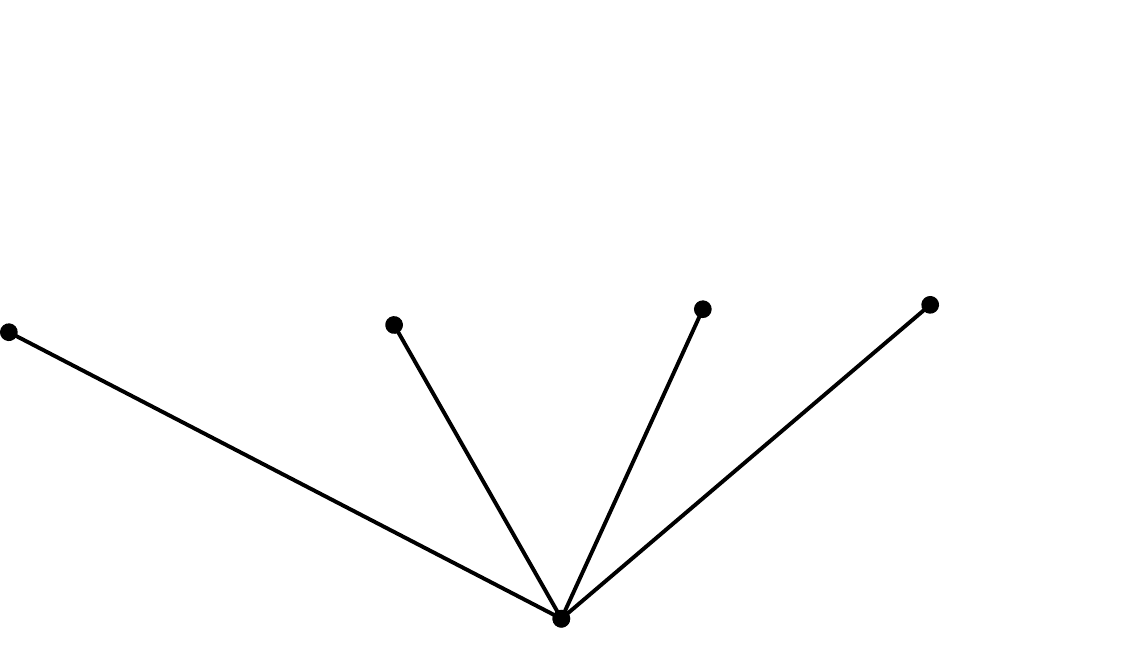}}%
    \put(0.49283671,0.00486445){\color[rgb]{0,0,0}\makebox(0,0)[lt]{\lineheight{1.25}\smash{\begin{tabular}[t]{l}$u$\end{tabular}}}}%
    \put(0,0){\includegraphics[width=\unitlength,page=2]{ill2.pdf}}%
  \end{picture}%
\endgroup%

	\end{figure}
	\begin{center}
		\textbf{Fig 8.} An example for $n=5$ and $k=6$.
	\end{center}
\end{center}
	
	\vskip .2cm
	{\bf Case 2.} $\frac{k}{2} \geq n+1$. In this case $\lceil\frac{2n}{k}\rceil=1$ and $k\geq 2n+2$. Let $Q$ be the induced subgraph of $Q_n$ with $V(Q)=\{v=x_0x_1\dots x_{n-3}01|x_i=0\text{ or }1,i=0,1,\dots,n-3 \}$, obviously $\left((u)^0\right)^{n-1}$ and $(u)^{n-1}$ are belong to $V(Q)$, by Lemma \ref{oddpathcycle}, there is a path $P$ of length $k-(2n-1)$ between $\left((u)^0\right)^{n-1}$ and $(u)^{n-1}$ in $Q$. Let $F=\{C_k\}$ with $$C_k = \left( (u)^0, \left(\left(u\right)^0\right)^1,
	\ldots, \left(\left(u\right)^{n - 2}\right)^{n - 1}, (u)^{n - 1}, P, \left(\left(u\right)^0\right)^{n - 1}, (u)^0\right)$$Then $F$ is a $C_k$-structure cut of $Q_n$.
	\begin{center}
	\begin{figure}[H] 
		\centering
		\def\svgwidth{300px}
		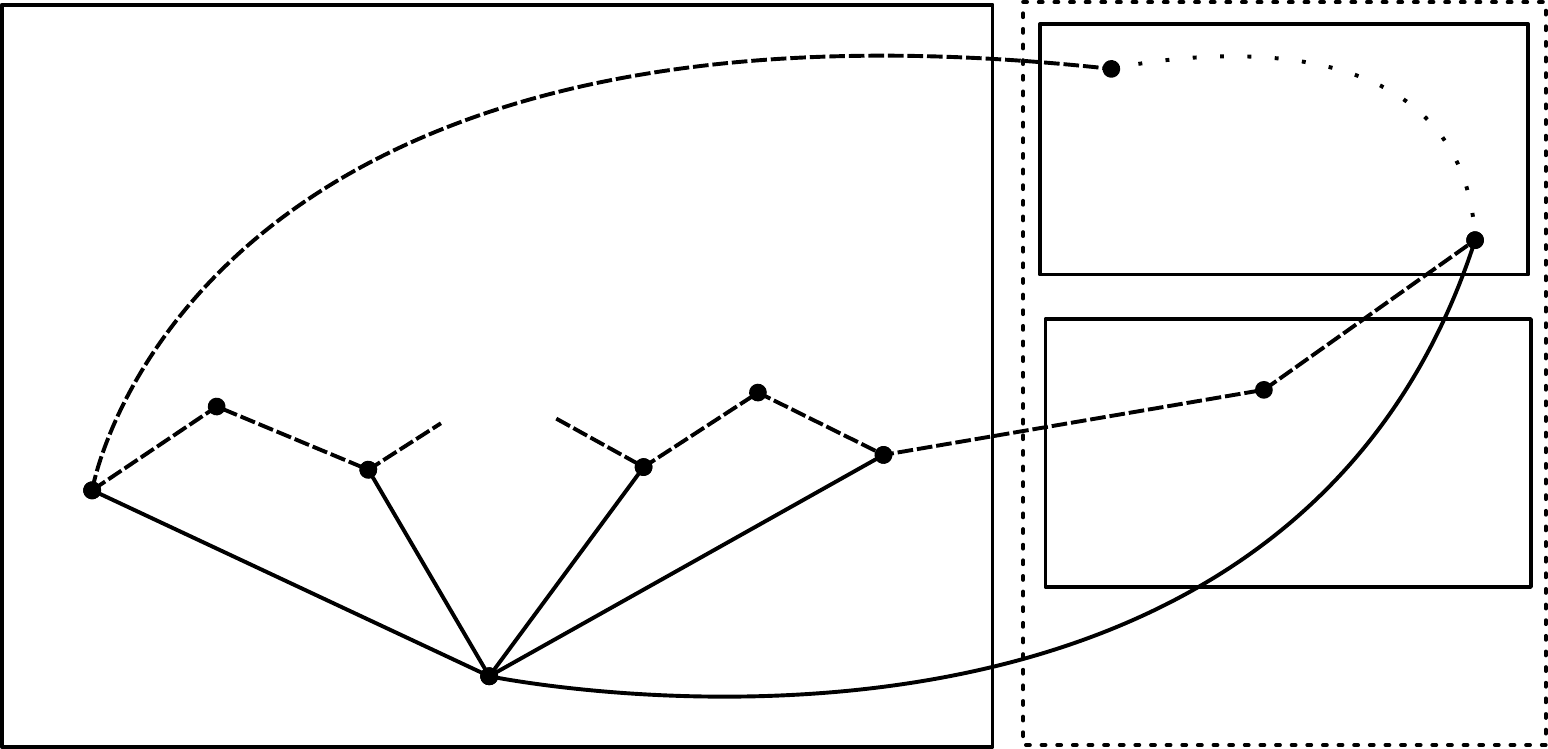
	\end{figure}
	\begin{center}
		\textbf{Fig 9.}
	\end{center}
\end{center}
\end{proof}

By Lemma \ref{cycleoddks}, Lemma \ref{cycleksandkgeq}, Lemma \ref{n=3} and Lemma \ref{cycleleq} we can obtain the following theorem.
\begin{thm}\label{cycleres}
	For $n\geq 3$
	\begin{equation}
	\kappa^s(Q_n;C_k)=
	\begin{cases}
	\lceil\frac{2n}{k+1}\rceil & \text{$k$ is odd and $3\leq k\leq 2^{n-1}$,} \\
	\lceil\frac{2n}{k}\rceil & \text{$k$ is even and $4\leq k\leq 2^{n-1}$.}
	\end{cases}
	\end{equation}
	\begin{equation}
	\kappa(Q_n;C_k)
	\begin{cases}
	=2 & \text{$n=3,k=4$,} \\
	=n-2 & \text{$n\geq 4$ and $k=4$,}\\
	=\lceil\frac{2n}{k}\rceil & \text{$n\geq 5$, $k$ is even and $6\leq k\leq 2^{n-2}$,}\\
	\geq \lceil\frac{2n}{k}\rceil & \text{$n\geq 4$, $k$ is even and $2^{n-2}+2\leq k\leq 2^{n-1}$.}
	\end{cases}
	\end{equation}
\end{thm}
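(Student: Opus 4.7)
The plan is to assemble the theorem directly from the lemmas established in this section, since all the real work has already been done; Theorem~\ref{cycleres} is essentially a summary statement. I would split the argument into two displayed equations, treating $\kappa^s(Q_n;C_k)$ first and $\kappa(Q_n;C_k)$ second, and for each piece I would cite the matching upper and lower bounds.

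For the substructure connectivity, when $k$ is odd I would invoke Lemma~\ref{cycleoddks} directly, which already gives the exact value $\lceil 2n/(k+1)\rceil$ by reducing to $\kappa^s(Q_n;P_k)$ via Theorem~\ref{paththm} (using that $Q_n$ is bipartite, so any connected subgraph of $C_k$ with $k$ odd is a path). When $k$ is even, Lemma~\ref{cycleevenks} gives the upper bound $\lceil 2n/k\rceil$ and Theorem~\ref{cycleksandkgeq} (which itself packages Lemma~\ref{cyclesubstr}) gives the matching lower bound. Equality follows.

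For the structure connectivity, I would read off the four cases one at a time. The case $n=3,\ k=4$ comes from Lemma~\ref{n=3}. The case $n\geq 4,\ k=4$ is already in Theorem~\ref{linsresult}, giving $n-2$. For $n\geq 5$ and $6\leq k\leq 2^{n-2}$ with $k$ even, Lemma~\ref{cycleleq} supplies the upper bound $\lceil 2n/k\rceil$ and Theorem~\ref{cycleksandkgeq} (via the trivial inequality $\kappa(Q_n;C_k)\geq \kappa^s(Q_n;C_k)$) supplies the matching lower bound. Finally, in the range $2^{n-2}+2\leq k\leq 2^{n-1}$ we only have the lower bound $\kappa(Q_n;C_k)\geq\kappa^s(Q_n;C_k)=\lceil 2n/k\rceil$ from Theorem~\ref{cycleksandkgeq}, so this case is stated with the inequality only, matching the claim.

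Since every ingredient is already established, the only real drafting task is bookkeeping: making sure the case ranges of $n$ and $k$ listed in the theorem exactly cover the union of the ranges handled by Lemmas~\ref{cycleoddks}, \ref{cycleevenks}, \ref{cyclesubstr}, \ref{n=3}, and \ref{cycleleq}, with no overlaps that would introduce inconsistency. The main (very minor) obstacle is double-checking the boundary $k=4$ for $n\geq 4$, where I would make sure that the $n-2$ value from Theorem~\ref{linsresult} is not accidentally contradicted by the generic bound $\lceil 2n/k\rceil=\lceil n/2\rceil$ from Theorem~\ref{cycleksandkgeq}; since $n-2\geq \lceil n/2\rceil$ for $n\geq 4$, there is no conflict, and the theorem as stated is consistent. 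The proof itself can therefore be written in a single short paragraph that simply cites the lemmas case by case.
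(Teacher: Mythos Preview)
Your proposal is correct and matches the paper's own proof, which is simply the one-line remark ``By Lemma~\ref{cycleoddks}, Lemma~\ref{cycleksandkgeq}, Lemma~\ref{n=3} and Lemma~\ref{cycleleq} we can obtain the following theorem.'' Your case-by-case bookkeeping and the consistency check at $k=4$ are more detailed than what the paper writes, but the logical content is identical.
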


\begin{lem}\label{kks2^m}
	For $n=4,5$ and $2\leq m\leq n-2$, $\kappa(Q_n;C_{2^m})=n-m$.
\end{lem}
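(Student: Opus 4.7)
The plan is to reduce the lemma to a finite case check, since the constraints $n \in \{4,5\}$ and $2 \leq m \leq n-2$ leave only three pairs: $(n,m) \in \{(4,2),(5,2),(5,3)\}$. These correspond to computing $\kappa(Q_4;C_4)$, $\kappa(Q_5;C_4)$, and $\kappa(Q_5;C_8)$, with target values $2$, $3$, and $2$ respectively.

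The two cases with $m=2$ require nothing new: Theorem \ref{linsresult} already records $\kappa(Q_n;C_4) = n-2$ for every $n\geq 4$. Substituting $n=4$ gives $\kappa(Q_4;C_4)=2=n-m$, and substituting $n=5$ gives $\kappa(Q_5;C_4)=3=n-m$, which disposes of both of these instances immediately.

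For the remaining case $(n,m)=(5,3)$, note that $k=2^3=8=2^{n-2}$, so $k$ lies exactly in the range $6\leq k\leq 2^{n-2}$ covered by the third equality case of Theorem \ref{cycleres} with $n=5$. That theorem therefore supplies $\kappa(Q_5;C_8)=\lceil 2\cdot 5/8\rceil = 2 = 5-3$, completing this last case. (One could equivalently read the bound off as an upper bound from Theorem \ref{Manesresult} combined with the matching lower bound from Theorem \ref{cycleksandkgeq}, but Theorem \ref{cycleres} already packages both directions.)

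Because every case reduces to a previously established equality, there is no genuine obstacle here; the lemma plays a bookkeeping role, specializing the general statements of Theorem \ref{linsresult} and Theorem \ref{cycleres} to exactly the small-dimensional instances that remained open in Mane's question. The only mild point to be careful about is confirming that the boundary value $k=2^{n-2}$ is included in the $6\leq k\leq 2^{n-2}$ range of Theorem \ref{cycleres}, which it is.
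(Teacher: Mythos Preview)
Your proposal is correct and, like the paper, reduces the lemma to checking the three pairs $(n,m)\in\{(4,2),(5,2),(5,3)\}$. The only difference is in which prior results you invoke: the paper sandwiches each case between the lower bound $\kappa(Q_n;C_{2^m})\geq\lceil n/2^{m-1}\rceil$ from Theorem~\ref{cycleksandkgeq} and the upper bound $\kappa(Q_n;C_{2^m})\leq n-m$ from Theorem~\ref{Manesresult}, then tabulates that these coincide, whereas you cite the already-assembled equalities in Theorem~\ref{linsresult} (for $m=2$) and Theorem~\ref{cycleres} (for $(5,3)$). Since Theorem~\ref{cycleres} precedes this lemma in the paper and itself packages the same lower and upper bounds, your version is not circular; it is just a slightly more economical repackaging of the same ingredients.
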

\begin{proof}
	We have $\kappa(Q_n;C_{2^m})\geq \lceil\frac{n}{2^{m-1}}\rceil$ by Lemma \ref{cycleksandkgeq}. We also have $\kappa(Q_n;C_{2^m})\leq n-m$ by Lemma \ref{Manesresult}.
	\begin{center}
		\begin{table}[H]
			\centering
			\begin{tabular}{|c|c|c|}
				\hline
				& $\lceil\frac{n}{2^{m-1}}\rceil$ & $n-m$ \\ \hline
				$n=4,m=2$ & 2                               & 2     \\ \hline
				$n=5,m=2$ & 3                               & 3     \\ \hline
				$n=5,m=3$ & 2                               & 2     \\ \hline
			\end{tabular}
		\end{table}
	\end{center}

Therefore the statement is proved.
\end{proof}

\begin{lem}\label{kks2mcy}
	For $n\geq 6$ and $3\leq m\leq n-2$, $\kappa(Q_n;C_{2^m})=\kappa^s(Q_n;C_{2^m})=\lceil\frac{n}{2^{m-1}}\rceil$
\end{lem}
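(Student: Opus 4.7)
The plan is to read this lemma as a direct corollary of Theorem \ref{cycleres}. All that is required is to verify that $k=2^m$ falls inside the parameter windows of the two formulas in that theorem, and then to evaluate $\lceil 2n/k\rceil$ at $k=2^m$. Since $2^m$ is even, the odd-$k$ branches of Theorem \ref{cycleres} are irrelevant, and only the even-$k$ branches need to be checked.

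For the substructure bound, I would argue as follows. The hypotheses $n\ge 6$ and $3\le m\le n-2$ give $k=2^m$ even with $k\ge 2^3=8\ge 4$ and $k\le 2^{n-2}\le 2^{n-1}$, so $4\le k\le 2^{n-1}$. The second case of the formula for $\kappa^s(Q_n;C_k)$ in Theorem \ref{cycleres} then applies directly, yielding
\[
\kappa^s(Q_n;C_{2^m})=\left\lceil\frac{2n}{2^m}\right\rceil=\left\lceil\frac{n}{2^{m-1}}\right\rceil.
\]

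For the structure bound, I would invoke the third case of the formula for $\kappa(Q_n;C_k)$ in Theorem \ref{cycleres}. Its hypotheses are $n\ge 5$, $k$ even, and $6\le k\le 2^{n-2}$. All of these are satisfied: $n\ge 6\ge 5$; $k=2^m$ is even; $k=2^m\ge 8\ge 6$ because $m\ge 3$; and $k=2^m\le 2^{n-2}$ because $m\le n-2$. Therefore
\[
\kappa(Q_n;C_{2^m})=\left\lceil\frac{2n}{2^m}\right\rceil=\left\lceil\frac{n}{2^{m-1}}\right\rceil,
\]
which together with the substructure equality gives the desired chain of equalities.

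There is essentially no obstacle here; the only point requiring any attention is the lower bound $k\ge 6$ in the third case of Theorem \ref{cycleres}, which forces the hypothesis $m\ge 3$ (as opposed to $m\ge 2$, which is the setting of Lemma \ref{kks2^m}). This explains why the cases $m=2$ or $n\le 5$ must be handled separately, as done there, and why the present lemma isolates precisely the range in which Theorem \ref{cycleres} gives the answer cleanly.
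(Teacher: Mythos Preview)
Your proof is correct and follows exactly the paper's approach: the paper's own proof consists solely of the line ``By Theorem~\ref{cycleres}.'' You have simply spelled out the parameter checks that make this citation valid, which is precisely what is needed.
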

\begin{proof}
	By Theorem \ref{cycleres}.
\end{proof}
\begin{lem}\label{budengs}
	For $n\geq 6$ and $3\leq m\leq n-2$, $\lceil\frac{n}{2^{m-1}}\rceil<n-m$.
\end{lem}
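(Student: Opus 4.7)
The inequality is purely arithmetic, so my plan is a short case split on where $m$ sits relative to $n/2$.

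In the range $3 \leq m \leq n/2$, since $m \geq 3$ forces $2^{m-1} \geq 4$, I will use the bound $\lceil n/2^{m-1} \rceil \leq \lceil n/4 \rceil \leq (n+3)/4$, while on the other side $n - m \geq n/2$. Hence it suffices to check $(n+3)/4 < n/2$, which reduces to $n > 3$ and is easily true for $n \geq 6$. This disposes of the ``small $m$'' regime and, in particular, catches the tight boundary instance $n = 6$, $m = 3$ (where the two sides just barely separate as $2 < 3$).

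In the complementary range $n/2 < m \leq n - 2$, the idea is that $2^{m-1}$ has already outgrown $n$, forcing $\lceil n/2^{m-1} \rceil = 1$; the claim then follows immediately from $n - m \geq 2$. This reduces to the auxiliary inequality $2^{\lfloor n/2 \rfloor} > n$ for every $n \geq 6$, since $m > n/2$ with $m \in \mathbb{Z}$ forces $m - 1 \geq \lfloor n/2 \rfloor$. I will establish the auxiliary inequality by verifying the base cases $n = 6$ ($2^3 = 8 > 6$) and $n = 7$ ($2^3 = 8 > 7$), then inducting in steps of two: if $2^{\lfloor n/2 \rfloor} > n$, then $2^{\lfloor (n+2)/2 \rfloor} = 2 \cdot 2^{\lfloor n/2 \rfloor} > 2n \geq n + 2$.

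There is no genuinely hard step; the only thing to be careful about is drawing the case boundary so that every admissible pair $(n, m)$ falls into exactly one regime. I put $m = n/2$ (when $n$ is even) into the first regime, so that the tightest instance $n = 6$, $m = 3$ is handled by the coarse arithmetic bound rather than by the exponential one, which would not yet apply there.
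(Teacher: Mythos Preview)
Your argument is correct. The case split at $m=n/2$ cleanly separates the regime where the coarse bound $\lceil n/2^{m-1}\rceil\le\lceil n/4\rceil\le(n+3)/4<n/2\le n-m$ suffices from the regime where $2^{m-1}\ge 2^{\lfloor n/2\rfloor}>n$ already forces the ceiling to equal $1$; the auxiliary inequality $2^{\lfloor n/2\rfloor}>n$ for $n\ge 6$ is handled by your two-step induction, and the boundary pair $(n,m)=(6,3)$ lands safely in the first case.

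The paper takes a rather different route: it treats $m$ as a real variable, sets $f(m)=n/2^{m-1}+1-(n-m)$, differentiates twice to show $f$ is convex on $[3,n-2]$, and then verifies $f(3)<0$ and $f(n-2)<0$ (the latter via another monotonicity argument in $n$). Your proof is more elementary---no calculus, no continuous interpolation---and arguably more transparent for what is after all a discrete inequality. The paper's approach has the virtue of being mechanical (convexity reduces a range check to two endpoint checks), but it imports analytic machinery that your case split shows to be unnecessary.
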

\begin{proof}
	For a given integer $n\geq 6$, let $f (m) =\frac{n}{2^{m-1}} + 1 - (n- m)$. Derivate $f(m)$ with respect to $m$, $f' (m) = 1 - 2^{1 - m} n \ln 2$, again derivate $f'(m)$, we have $f'' (m) = 2^{1 -m} n (\ln 2)^2 $. Obviously, $f'' (m) \geq 0$ which implies $f'(m)$ monotonically increasing respect to $m$.
	
   After simple calcutions we obtain 
	\begin{equation}
	\begin{cases}
	f' (3) = 1 - \frac{n \ln 2}{4} \\
	f' (n - 2) =
	1 - \frac{n \ln 2}{2^{n - 3}}
	\end{cases}
	\end{equation}
	$f' (3)<0$ when $n\geq 6$. Let $g (n) =f' (n - 2)= 1
	- \frac{n \ln 2}{2^{n - 3}}$, derivate $g(n)$ with respect to $n$, $g' (n) = - 2^{3 - n} \ln 2 (1
	- nln2) >0$ when $n\geq 6$, therefore $g(n)$ monotonically increasing respect to $n$ and $\min\limits_{n\geq 6} g(n)=g(6)=1 - \frac{6 \ln
		2}{8} > 0$, this implies $f' (n - 2)=g (n)>0$ and $f(m)$ firstly monotonically decreasing and then monotonically increasing respect to $m$. Therefore $\max\limits_{m} f (m) =\max\limits_{m}\{ f (3), f (n - 2) \}$.
	
	By simple calculations we obtain
	\begin{equation}
	\begin{cases}
	f (3) = 4 - \frac{3 n}{4} < 0 \\
	f (n - 2) = \frac{n}{2^{n - 3}} - 1
	\end{cases}
	\end{equation}
	Let $h (n) = \frac{n}{2^{n - 3}} - 1$, derivate $h(n)$ with respect to $n$, $h' (n) = 2^{3 - n} (1- nln2) <0$ when $n\geq 6$. Therefore $h(n)$ monotonically decreasing respect to $n$ and $\max\limits_{n\geq 6}h(n)=h(6)=\frac{6}{8} - 1 <
	0$, therefore $f (n - 2) < 0$ implies $f (m) = \frac{n}{2^{m-1}} + 1 - (n - m) < 0$ i.e. $\lceil \frac{n}{2^{m-1}} \rceil \leq \frac{n}{2^{m-1}} + 1 < n - m$.
\end{proof}

By Theorem \ref{cycleres}, Lemma \ref{kks2^m}, Lemma \ref{kks2mcy} and Lemma \ref{budengs}, we obtain the following theorem as the solution of the open problem in \cite{ManeStructure}.

\begin{thm}
	For $n\geq 4$ and for each $2\leq m \leq n-2$
	\begin{equation}
	\kappa(Q_n;C_{2^m})=
	\begin{cases}
	=n-m& \text{$n=4,5$ or $n\geq 4,m=2$,} \\
	=\lceil\frac{n}{2^{m-1}}\rceil<n-m& \text{$n\geq 6$ and $3\leq m\leq n-2$.}
	\end{cases}
	\end{equation}
\end{thm}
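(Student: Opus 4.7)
The plan is to assemble the stated value of $\kappa(Q_n;C_{2^m})$ by a short case split, since all the heavy lifting has already been done in the preceding lemmas. Let $k=2^m$; because $2\leq m\leq n-2$, $k$ is even with $4\leq k\leq 2^{n-2}$, so Theorem \ref{cycleres} and Lemma \ref{Manesresult} apply to every pair $(n,m)$ under discussion. First I would dispatch the boundary strip $m=2$ (i.e.\ $C_{2^m}=C_4$): the second branch of Theorem \ref{cycleres} gives $\kappa(Q_n;C_4)=n-2=n-m$ for every $n\geq 4$, which yields the top line of the formula on the whole slab $m=2$.

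Next I would dispose of the low-dimensional cases $n=4$ and $n=5$ by quoting Lemma \ref{kks2^m}, which directly establishes $\kappa(Q_n;C_{2^m})=n-m$ for $2\leq m\leq n-2$ in those two dimensions. This closes the top branch. The only remaining regime is $n\geq 6$ together with $3\leq m\leq n-2$; here Lemma \ref{kks2mcy} already supplies the exact value $\kappa(Q_n;C_{2^m})=\lceil n/2^{m-1}\rceil$, and Lemma \ref{budengs} certifies that $\lceil n/2^{m-1}\rceil<n-m$ on this range. Stringing these four citations together yields the two-line formula stated in the theorem.

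There is essentially no real obstacle beyond bookkeeping — the substantive analytic content has been absorbed into Lemma \ref{budengs}, whose calculus argument shows that the bound $\kappa(Q_n;C_{2^m})\leq n-m$ from Theorem \ref{Manesresult} is strict precisely in the regime $n\geq 6$, $m\geq 3$. The only thing to verify while writing up is that the sub-cases overlap without conflict: the $m=2$ slab and the $(n,m)\in\{4,5\}\times\{2,\dots,n-2\}$ block both deliver the value $n-m$ where they meet, so they can be listed as a single branch, and outside that union the inequality $\lceil n/2^{m-1}\rceil<n-m$ of Lemma \ref{budengs} guarantees the two branches of the piecewise formula really are disjoint in value. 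This finally answers the open problem of \cite{ManeStructure} in the negative for $n\geq 6$, $m\geq 3$, and confirms it in every other case.
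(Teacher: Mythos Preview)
Your proposal is correct and follows essentially the same route as the paper: the paper's own proof is a one-line citation of Theorem \ref{cycleres}, Lemma \ref{kks2^m}, Lemma \ref{kks2mcy}, and Lemma \ref{budengs}, and you invoke exactly these four results in the same case split (the $m=2$ slab via Theorem \ref{cycleres}, the $n\in\{4,5\}$ block via Lemma \ref{kks2^m}, and the remaining regime via Lemmas \ref{kks2mcy} and \ref{budengs}). Your added remark about the overlapping sub-cases agreeing is a helpful sanity check but not something the paper spells out.
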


In Theorem \ref{cycleres} we have $\kappa(Q_n;C_k)\geq \lceil\frac{2n}{k}\rceil$ for $n\geq 4$, $k$ even and $2^{n-2}+2\leq k\leq 2^{n-1}$, but in this case, whether $\kappa(Q_n;C_k)$ equals $\lceil\frac{2n}{k}\rceil$ is still a question. 
\section*{Acknowledgements}
The project is supported partially by Hu Xiang Gao Ceng Ci Ren Cai Ju Jiao Gong Cheng-Chuang Xin Ren Cai (No. 2019RS1057).

\bibliographystyle{plain}

\bibliography{refer}

\end{document}